\documentclass[final,12pt,backref=page]{colt2025} % Include author names

% The following packages will be automatically loaded:
% amsmath, amssymb, natbib, graphicx, url, algorithm2e

\title[Faster Acceleration for Steepest Descent]{Faster Acceleration for Steepest Descent}

\usepackage{mathtools}
\newcommand{\R}{\mathbb{R}}

\newcommand{\defeq}{\coloneqq}

\newcommand{\eps}{\varepsilon}
\newcommand{\grad}{\nabla}

\DeclarePairedDelimiter{\crl}{\{}{\}}

\DeclareMathOperator*{\argmin}{arg\,min}

% styles
\newcommand{\mc}[1]{\mathcal{#1}}

% Special letters: blackboard, mathcal, widehat % djhsu magic
\def\ddefloop#1{\ifx\ddefloop#1\else\ddef{#1}\expandafter\ddefloop\fi}
\def\ddef#1{\expandafter\def\csname 
bb#1\endcsname{\ensuremath{\mathbb{#1}}}}
\ddefloop ABCDEFGHIJKLMNOPQRSTUVWXYZ\ddefloop
\def\ddefloop#1{\ifx\ddefloop#1\else\ddef{#1}\expandafter\ddefloop\fi}
\def\ddef#1{\expandafter\def\csname 
b#1\endcsname{\ensuremath{\mathbf{#1}}}}
\ddefloop ABCDEFGHIJKLMNOPQRSTUVWXYZ\ddefloop
\def\ddef#1{\expandafter\def\csname 
c#1\endcsname{\ensuremath{\mathcal{#1}}}}
\ddefloop ABCDEFGHIJKLMNOPQRSTUVWXYZ\ddefloop
\def\ddef#1{\expandafter\def\csname 
h#1\endcsname{\ensuremath{\widehat{#1}}}}
\ddefloop ABCDEFGHIJKLMNOPQRSTUVWXYZ\ddefloop
\def\ddef#1{\expandafter\def\csname 
hc#1\endcsname{\ensuremath{\widehat{\mathcal{#1}}}}}
\ddefloop ABCDEFGHIJKLMNOPQRSTUVWXYZ\ddefloop
\def\ddef#1{\expandafter\def\csname 
t#1\endcsname{\ensuremath{\widetilde{#1}}}}
\ddefloop ABCDEFGHIJKLMNOPQRSTUVWXYZ\ddefloop
\def\ddef#1{\expandafter\def\csname 
tc#1\endcsname{\ensuremath{\widetilde{\mathcal{#1}}}}}
\ddefloop ABCDEFGHIJKLMNOPQRSTUVWXYZ\ddefloop

    %Indicator
    %Indicator

\usepackage{nicefrac}

\newsavebox\CBox

\newcommand{\norm}[1]{\left \| #1 \right \|}
\newcommand{\abs}[1]{\left\lvert #1\right\rvert} %

\newcommand{\pa}[1]{\left( #1 \right)}
\newcommand{\bracks}[1]{\left[ #1 \right]}
\newcommand{\inner}[2]{\left\langle #1,\, #2 \right\rangle}

\newcommand{\removed}[1]{}

\usepackage{booktabs}
\usepackage{subcaption}
\usepackage{enumitem}
\DeclareSymbolFont{bbold}{U}{bbold}{m}{n}
\DeclareSymbolFontAlphabet{\mathbbold}{bbold}

\newenvironment{proofsketch}{\noindent\textbf{Proof sketch} \ }{\hfill$\blacksquare$}
\usepackage{times}
\usepackage{algorithm}
\usepackage{algorithmic}
\usepackage{colortbl} % Added
\usepackage{makecell}  % Added
\usepackage{tabstackengine} % Added
\usepackage{adjustbox} % Added
\usepackage{titletoc} %Added
\usepackage{chngcntr} % Added to allow different Theorem numberings

\renewcommand*{\backref}[1]{}
\renewcommand*{\backrefalt}[4]{%
    \ifcase #1 (Not cited.)%
    \or        (Cited on page~#2.)%
    \else      (Cited on pages~#2.)%
    \fi}

\coltauthor{%
 \Name{Cedar Site Bai} \Email{bai123@purdue.edu}\\
 \addr Department of Computer Science\\
  Purdue University
 \AND
 \Name{Brian Bullins} \Email{bbullins@purdue.edu}\\
 \addr Department of Computer Science\\
  Purdue University
}

\begin{document}

\maketitle

\begin{abstract}%
   Recent advances \citep{Sherman2017Area, sidford2018coordinate, cohen2021relative} have overcome the fundamental barrier of dimension dependence in the iteration complexity of solving $\ell_\infty$ regression with first-order methods. Yet it remains unclear to what extent such acceleration can be achieved for general $\ell_p$ smooth functions. In this paper, we propose a new accelerated first-order method for convex optimization under non-Euclidean smoothness assumptions. In contrast to standard acceleration techniques, our approach uses primal-dual iterate sequences taken with respect to \emph{differing} norms, which are then coupled using an \emph{implicitly} determined interpolation parameter. For $\ell_p$ norm smooth problems in $d$ dimensions, our method provides an iteration complexity improvement of up to 
$O(d^{1-\frac{2}{p}})$ in terms of calls to a first-order oracle, thereby allowing us to circumvent long-standing barriers in accelerated non-Euclidean steepest descent. 
\end{abstract}

\begin{keywords}%
  First-order acceleration, convex optimization, non-Euclidean smoothness, steepest descent%
\end{keywords}

\section{Introduction}

Large-scale optimization tasks are a central part of modern machine learning, and many of the algorithms that find success in training these models, such as SGD \citep{robbins1951stochastic}, AdaGrad \citep{duchi2011adaptive}, and Adam \citep{kingma2014adam}, among others, build on classic approaches in (convex) optimization. One prominent example is that of \emph{momentum} \citep{polyak1964some}, and the related acceleration technique of \cite{nesterov1983method}, which use both current and previous (accumulated) gradient information to accelerate beyond the basic gradient descent method. For smooth, convex problems, this \emph{accelerated gradient descent} (AGD) method converges (in terms of optimality gap) at a rate of $O(1/T^2)$, which improves upon the basic gradient descent rate of $O(1/T)$, and this rate is furthermore known to be tight due to matching lower bounds \citep{nesterov2018lectures}.

The general iterative scheme when moving from $x_t$ to $x_{t+1}$ for heavy ball momentum (with parameters $\alpha > 0$, $\beta \in [0,1]$) is given as
\begin{equation*}
    x_{t+1} = x_t - \alpha \grad f(x_t) - \beta (x_t - x_{t-1})
\end{equation*}
while the iterations of accelerated gradient descent can be expressed as
\begin{align*}
    y_{t+1} &= x_t - \alpha \grad f(x_t)\\
    x_{t+1} &= y_{t+1} - \beta(y_{t+1}-y_t).
\end{align*}
Crucially, there is a natural (Euclidean) interpretation of the trajectory of these updates, whereby each gradient step is slightly ``pushed'' in the direction of $x_{t-1} - x_{t}$ (respectively, $y_{t}-y_{t+1}$), with the amount of ``force'' applied depending on the choice of $\beta$. Indeed, this structure leads to an analysis whose final rate of convergence has a Euclidean-based ($\ell_2$ norm) dependence on the smoothness parameter, as well as the initial distance to the minimizer.

\citet{nesterov2005smooth} later generalized these techniques to non-Euclidean settings, introducing an \emph{estimate sequence} approach to acceleration, whose iterates are interpolations of two additional iterate sequences: one given by a steepest descent update (in the appropriate norm), and another given by the minimizer of a function that comprises a term linear in the accumulated gradients (which provide, in a sense, a certain \emph{dual} characterization, as also found in the basic analysis for, e.g., mirror descent \citep{nemirovski1983problem} and regret minimization \citep{hazan2016introduction}) along with the Bregman divergence of a distance generating function. Other interpretations of acceleration have since been presented (e.g., \citep{bubeck2015geometric}), including the linear coupling framework of \citet{allen2017linear}, which views acceleration as a certain \emph{coupling} between gradient (steepest) descent and mirror descent updates, each step of which takes the form (for some $\alpha, \gamma > 0$, $\beta \in [0,1]$):
\begin{align*}
    x_{t+1} &=  \beta z_t + (1-\beta)y_t\\
    y_{t+1} &= \argmin\limits_{y \in \R^d}\left\{\inner{\grad f(x_{t+1})}{y - x_{t+1}}+\frac{1}{2\alpha}\norm{y - x_{t+1}}^2\right\}\\
    z_{t+1} &= \argmin\limits_{z \in \R^d} \left\{\gamma\inner{\grad f(x_{t+1})}{z-z_t} + V_{z_t}(z)\right\},
\end{align*}
where $V_x(y)$ is the Bregman divergence with respect to the (distance generating) function $\phi(\cdot)$, i.e., $V_x(y) \defeq \phi(y) - \phi(x) - \inner{\grad \phi(x)}{y-x}$, and all of which further suggests a natural primal-dual interpretation of acceleration.

A notable use of these approaches occurs when optimizing over the simplex, in which case the fact that the Bregman divergence of the negative entropy function is strongly convex w.r.t.~$\ell_1$ norm (via Pinsker's inequality) suffices to yield the desired accelerated rate (see, e.g., Appendix~A in \citep{allen2017linear} for further details), and it even provides a natural means of deriving the (smooth) softmax approximation \citep{nesterov2005smooth, beck2012smoothing}.

Turning to the problem of $\ell_\infty$ regression \citep{kelner2014almost} or its softmax approximation \citep{nesterov2005smooth}, which is smooth with respect to the $\ell_\infty$ norm, a key challenge arises. Specifically, while the algorithm requires $\phi(\cdot)$ to be strongly convex w.r.t.~the $\ell_\infty$ norm (for the convergence guarantees to hold), \emph{any such $\phi(\cdot)$ that satisfies this condition will have a range of at least $O(d)$} \citep[Appendix A.1]{sidford2018coordinate}. While various approaches have been proposed \citep{Sherman2017Area, sidford2018coordinate, cohen2021relative} to overcome this fundamental barrier of dimension dependence in the iteration complexity in this special case of $p=\infty$, it remains unclear, in the more general $\ell_p$-smooth setting for $p > 2$, to what extent any such acceleration is possible without breaking the known lower bound of $\Omega(L\norm{x^*}_p^2/T^{\frac{p+2}{p}})$ \citep{guzman2015lower}. 

\paragraph{Our contributions.} In this work, we aim to circumvent this barrier in general by providing a faster accelerated non-Euclidean steepest descent method, called Hyper-Accelerated Steepest Descent (HASD) (Algorithm~\ref{alg:hasd}), that improves upon previous results in accelerated steepest descent \citep{nesterov2005smooth, allen2017linear} by a factor of up to $O(d^{1-\frac{2}{p}})$, in terms of calls to a first-order oracle, where $d$ is the problem dimension. (The full presentation of our convergence guarantees may be found in Theorem~\ref{thm:main}.) Our approach is based on a similar estimate sequence-type approaches as in \citep{nesterov2005smooth}, though with a key difference: rather than setting the interpolation parameter as a (fixed) function of the iteration index $t$, we instead choose the parameter implicitly, in a manner depending on \emph{local} properties of the function (specifically, the gradient at the subsequent iterate, which itself depends on the choice of parameter). 

In this way, our approach exhibits certain similarities with that of the (accelerated) HPE framework \citep{monteiro2010complexity, monteiro2013accelerated}, though \emph{we emphasize a crucial difference in the breaking of primal-dual symmetry and its eventual adjustment}. (We refer the reader to Section~\ref{sec:discussion} for a detailed discussion of this matter.) We further believe our results complement the view of A-HPE as (approximate) proximal point acceleration (e.g., \citep{carmon2020acceleration}) by offering a more general perspective in terms of primal-dual asymmetry, the algorithmic ``compensation'' for which leads to improved convergence guarantees. In addition, our analysis offers a principled framework for analyzing the oracle complexity of minimizing smooth convex functions under \textit{nonstandard geometries}---where the regularity of the objective is measured in norms \textit{differing from} the feasible domain---a longstanding open problem posed in  \citep{open2015Guzman}.

\subsection{Related work}

\paragraph{Steepest descent and acceleration.} The (unnormalized) steepest descent direction of $f$ at $x \in \R^d$ w.r.t.~a general norm $\norm{\cdot}$ is given by $\delta_{\text{sd}}(x) \defeq \norm{\grad f(x)}_* \delta_{\text{nsd}}(x)$, where we define $\delta_{\text{nsd}}(x) \defeq \argmin_{v : \norm{v} \leq 1} \grad f(x)^\top v$ \citep{boyd2004convex}, and so it follows that the gradient descent direction is a special case when taken w.r.t.~the Euclidean norm, i.e., $\norm{\cdot}_2$. Under appropriate $L$-smoothness assumptions w.r.t.~$\norm{\cdot}_2$, gradient descent (initialized at $x_0 \in \R^d$) may be further accelerated \citep{nesterov1983method}, improving the rate of convergence from $O(L\norm{x_0-x^*}_2^2/T)$ to $O(L\norm{x_0-x^*}_2^2/T^2)$, whereby the latter matches known lower bounds \citep{nesterov2018lectures}. Meanwhile, steepest descent w.r.t.~$\norm{\cdot}_p$ can be shown (under $L$-smoothness w.r.t.~$\norm{\cdot}_p$) to converge at a rate of $O(LR_p^2/T)$ (e.g., \citep{kelner2014almost}), where $R_p$ represents a bound (in terms of $\norm{\cdot}_p$) on the diameter of the problem, thereby functioning in a manner similar to the $\norm{x_0-x^*}_2^2$ term in the gradient descent rates.

Accelerated first-order methods have since been extended to non-Euclidean smoothness settings \citep{nesterov2005smooth} (see also \citep{allen2017linear} for further details). As discussed, however, the techincal requirements of these approaches lead to $\norm{\cdot}_2$ dependence in the problem diameter for $p > 2$. On the other hand, previous work by \cite{nemirovskii1985optimal} has shown how to achieve convergence rates of $O(LR_p^2/T^{\frac{p+2}{p}})$ for $p \geq 2$ (which trades off between the norm measuring the diameter and the power of $T$), and these are also known to be tight \citep{guzman2015lower, diakonikolas2024complementary}.

Also of note is the appearance of (momentum-based) steepest descent methods in the context of deep learning and training Large Language Models \citep{bernstein2018signsgd, balles2020geometry, chen2023symbolic}, for which we believe our results may provide an alternative (practical) viewpoint to acceleration for steepest descent methods. (We discuss this topic further in Section~\ref{sec:discussion}.)

\paragraph{Optimal higher-order acceleration.}
Although in this work we are primarily interested in first-order methods, some aspects of our technical contributions share similarities with recent advances in acceleration for \emph{higher-order} methods, i.e., methods which employ derivative information beyond first-order. We first recall that cubic regularization \citep{nesterov2006cubic} was shown to be amenable to (generalized) acceleration techniques \citep{nesterov2008accelerating}, and such techniques extend to $k^{th}$-order methods (that is, methods which involve minimizing a regularized $k^{th}$ order Taylor expansion), for $k > 2$ \citep{baes2009estimate}, achieving a rate of $O(1/T^{k+1})$. This has since been improved to $O(1/T^{\frac{3k+1}{2}})$ \citep{monteiro2013accelerated, gasnikov2019near} (the key idea behind which we discuss in Section~\ref{sec:discussion}), and furthermore these rates have been shown to be tight \citep{agarwal2018lower, arjevani2019oracle}.

\subsection{Outline}
We begin by establishing our setting and assumptions in Section~\ref{sec:prelims}, along with a general discussion of the particulars that occur when working with non-Euclidean norms. In Section~\ref{sec:main}, we present our main algorithm (Algorithm~\ref{alg:hasd}), as well as the key convergence guarantees (including Theorem~\ref{thm:main}) and their proofs. Section~\ref{sec:extensions} presents natural extensions of our approach to additional settings, such as strongly convex and gradient norm minimization. Finally, we conclude with a discussion of our method
as well as the opportunities presented for future work, in Sections~\ref{sec:discussion} and \ref{sec:conclusion}, respectively.

\section{Preliminaries}\label{sec:prelims}
In this work, we consider the unconstrained convex minimization problem:
\begin{equation}
    \min\limits_{x \in \R^d} f(x),
\end{equation}
where we let $x^*$ denote the minimizer of $f$. Letting $\norm{\cdot}_p$ and $\norm{\cdot}_{p^*}$ denote the standard $\ell_p$ norm and its dual norm, respectively, we are interested in the case where $f$ is $L$-\emph{smooth} w.r.t.~$\norm{\cdot}_p$, i.e., $\forall \ x, y \in \R^d$,
\begin{equation}\label{eq:smooth}
    \norm{\grad f(y) - \grad f(x)}_{p^*} \leq L\norm{y - x}_p\quad.
\end{equation}

Throughout, we specify $\norm{\cdot}_2$ when referring to the Euclidean norm, and we may observe that \eqref{eq:smooth} captures the standard (Euclidean) notion of smoothness for $p=2$. We will consider only the case where $p \geq 2$, and we further use the notation $x[i]$ to refer to the $i^{th}$ coordinate of $x$.

In addition, we say $f$ is $\mu$-\emph{strongly convex} w.r.t.~$\norm{\cdot}_p$ if, for all $x, y \in \R^d$,
\begin{equation}
    f(y) - f(x) - \inner{\grad f(x)}{y-x} \geq \frac{\mu}{2}\norm{y-x}_p^2\quad.
\end{equation}

\subsection{Comparing smoothness parameters}
It is useful to observe that $L$-smoothness w.r.t.~$\norm{\cdot}_p$ implies $L$-smoothness w.r.t.~$\norm{\cdot}_q$ for $2 \leq q \leq p$, since, by standard norm inequalities,
\begin{equation}\label{eq:smooth_alt}
    \norm{\grad f(y) - \grad f(x)}_{q^*} \leq \norm{\grad f(y) - \grad f(x)}_{p^*} \leq L\norm{y - x}_p \leq L\norm{y - x}_q\quad.
\end{equation}
On the other hand, $L$-smoothness w.r.t.~$\norm{\cdot}_q$ (for $2 \leq q \leq p$) implies $(d^{\frac{2}{q}-\frac{2}{p}}L)$-smoothness w.r.t.~$\norm{\cdot}_p$, i.e., $$\norm{\grad f(y) - \grad f(x)}_{p^*} \leq d^{\frac{2}{q}-\frac{2}{p}}L\norm{y - x}_p,$$ since
\begin{align*}
    \frac{1}{d^{\frac{1}{q}-\frac{1}{p}}}&\norm{\grad f(y) - \grad f(x)}_{p^*} \leq \norm{\grad f(y) - \grad f(x)}_{q^*} \leq L\norm{y - x}_q \leq d^{\frac{1}{q}-\frac{1}{p}}L\norm{y - x}_p.
\end{align*}

\section{Main results}\label{sec:main}
In this section, we present the main results of our paper, starting with our algorithm.
\subsection{Algorithm}
Our algorithm, called Hyper-Accelerated Steepest Descent (HASD) (Algorithm~\ref{alg:hasd}), is inspired by the estimate sequence-based approaches to acceleration (e.g., \citep{nesterov2005smooth, nesterov2008accelerating}). The key difference to observe, however, is the addition of the (simultaneous) finding of $\rho_t$ and $x_{t+1}$ such that the conditions outlined in the algorithm hold. We would further note that a line search procedure similar to \citep{bubeck2019near} can be used to find such a satisfying pair of $\rho_t$ and $x_{t+1}$, although the implicit relationship between $\rho_t$ and $x_{t+1}$ is (crucially) different from that in, for example, \citep{monteiro2013accelerated, bubeck2019near}, which requires some technical modifications we later elaborate in Section \ref{sec:binarysearch}.

\begin{algorithm}[H]
	\caption{\textbf{H}yper-\textbf{A}ccelerated \textbf{S}teepest \textbf{D}escent \textbf{(HASD)}}\label{alg:hasd}
	\begin{algorithmic}[1]
		\renewcommand{\algorithmicrequire}{\textbf{Input:}} \REQUIRE $x_0 \in \R^d$, $A_0 = 0$. Define $\psi_0(x) \defeq \frac{1}{2}\norm{x-x_0}_2^{2}$.
		\FOR{$t=0$ {\bfseries to} $T-1$}
		\STATE $v_t = \argmin\limits_{x\in \R^d} \psi_t(x)$
		\STATE Determine $\rho_t > 0$, $x_{t+1} \in \R^d$ for which the following hold simultaneously: 
        \begin{itemize}[topsep=5pt, itemsep=-2pt] 
      \item $\frac{1}{2}\frac{\norm{\grad f(x_{t+1})}_2^{2}}{\norm{\grad f(x_{t+1})}_{p^*}^{2}} \leq \rho_t \leq 2\frac{\norm{\grad f(x_{t+1})}_2^{2}}{\norm{\grad f(x_{t+1})}_{p^*}^{2}}$ 
      \item $a_{t+1} > 0$ s.t. $a_{t+1}^{2} = \frac{(A_t + a_{t+1})}{18L\rho_t}$ 
      \item Set $A_{t+1} = A_t + a_{t+1}$, $\tau_t = \frac{a_{t+1}}{A_{t+1}}$ 
      \item $y_{t} = (1-\tau_t) x_t + \tau_t v_t$ 
      \item $x_{t+1} = \argmin\limits_{x \in \R^d} \crl{\inner{\grad f(y_{t})}{x-y_t}+L\norm{x-y_t}_p^2}$
  \end{itemize} 
		\STATE $\psi_{t+1}(x) = \psi_t(x) + a_{t+1}[f(x_{t+1}) + \inner{\grad f(x_{t+1})}{x-x_{t+1}}]$
		\ENDFOR
        \renewcommand{\algorithmicrequire}{\textbf{Output:}} \REQUIRE $x_T$
	\end{algorithmic}
\end{algorithm}

\subsection{Convergence results}
We now establish the main theoretical guarantees of our work.
To begin, we show the following lemma, which establishes our basic guarantee, analogous to the standard progress guarantee, as in the case of smooth optimization. Rather than measuring how much progress we make in a single step, however, we require a bound on an term relating to the gradient \emph{at the subsequent point}.
\begin{lemma}\label{lem:progress}
    Let $f$ be $L$-smooth w.r.t.~$\norm{\cdot}_p$. Consider the update:
    \begin{equation*}
x_{t+1} = \argmin\limits_{x \in \R^d} \crl{\inner{\grad f(y_{t})}{x-y_t}+L\norm{x-y_t}_p^2}.
    \end{equation*}
    Then, $x_{t+1}$ can be expressed in closed form as
        \begin{equation}
        x_{t+1} = y_t - \frac{1}{2L}\norm{\grad f(y_t)}_{p^*}^{\frac{p-2}{p-1}} g_t \quad \text{where}\ g_t[i] \defeq \frac{\grad f(y_t)[i]}{\abs{\grad f(y_t)[i]}^{\frac{p-2}{p-1}}} \ \forall i \in \{1,\dots,d\}. \label{eq:update}
    \end{equation}

        In addition, we have that
    \begin{equation}
        \inner{\grad f(x_{t+1})}{y_t - x_{t+1}} \geq L\norm{x_{t+1}-y_t}_p^2 \geq \frac{1}{9L}\norm{\grad f(x_{t+1})}_{p^*}^2 \ . \label{eq:gradnorm}
    \end{equation}
\end{lemma}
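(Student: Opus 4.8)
The plan is to first establish the closed form \eqref{eq:update}, and then deduce \eqref{eq:gradnorm} from it using only the smoothness hypothesis. Write $g \defeq \grad f(y_t)$, so that the update is $x_{t+1} = y_t + w^*$ with $w^* \defeq \argmin_{w \in \R^d} F(w)$ and $F(w) \defeq \inner{g}{w} + L\norm{w}_p^2$. Since $F$ is strictly convex and coercive, $w^*$ is unique; if $g = 0$ then $w^* = 0$ and both claims are trivial, so assume $g \neq 0$. Rather than differentiate $\norm{\cdot}_p^2$ (which fails to be differentiable at the origin once $p > 2$), I would decompose $w = t u$ with $t \defeq \norm{w}_p \geq 0$ and $\norm{u}_p = 1$, so that $F(w) = t\inner{g}{u} + Lt^2 \geq -t\norm{g}_{p^*} + Lt^2$ by definition of the dual norm, and the right-hand side is minimized over $t \geq 0$ at $t^* = \norm{g}_{p^*}/(2L)$ with value $-\norm{g}_{p^*}^2/(4L)$. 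Equality is attained by taking $u$ to be the Hölder equality-case vector $u^*[i] \propto -\sgn(g[i])\abs{g[i]}^{1/(p-1)}$; normalizing it and using $\norm{g_t}_p = \norm{g}_{p^*}^{1/(p-1)}$ — which follows since $\sum_i \abs{g_t[i]}^p = \sum_i \abs{g[i]}^{p/(p-1)} = \norm{g}_{p^*}^{p^*}$ because $p^* = p/(p-1)$ — yields $w^* = -\frac{1}{2L}\norm{g}_{p^*}^{(p-2)/(p-1)}\, g_t$, which is exactly \eqref{eq:update}. From this derivation I would record two facts for later: $\norm{x_{t+1} - y_t}_p = \norm{g}_{p^*}/(2L)$, and (since $L\norm{w^*}_p^2 = \norm{g}_{p^*}^2/(4L)$ and $F(w^*) = -\norm{g}_{p^*}^2/(4L)$) that $\inner{g}{x_{t+1} - y_t} = F(w^*) - L\norm{w^*}_p^2 = -\norm{g}_{p^*}^2/(2L)$.

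For the first inequality in \eqref{eq:gradnorm}, I would split $\inner{\grad f(x_{t+1})}{y_t - x_{t+1}} = \inner{g}{y_t - x_{t+1}} + \inner{\grad f(x_{t+1}) - g}{y_t - x_{t+1}}$. The first term equals $\norm{g}_{p^*}^2/(2L)$ by the second recorded fact. The second term is at least $-\norm{\grad f(x_{t+1}) - g}_{p^*}\norm{y_t - x_{t+1}}_p \geq -L\norm{x_{t+1} - y_t}_p^2$, using Hölder's inequality followed by $L$-smoothness \eqref{eq:smooth}. Since $L\norm{x_{t+1} - y_t}_p^2 = \norm{g}_{p^*}^2/(4L)$, adding the two bounds gives $\inner{\grad f(x_{t+1})}{y_t - x_{t+1}} \geq \norm{g}_{p^*}^2/(2L) - \norm{g}_{p^*}^2/(4L) = \norm{g}_{p^*}^2/(4L) = L\norm{x_{t+1}-y_t}_p^2$. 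For the second inequality, $L$-smoothness and the triangle inequality give $\norm{\grad f(x_{t+1})}_{p^*} \leq \norm{g}_{p^*} + L\norm{x_{t+1}-y_t}_p = \frac{3}{2}\norm{g}_{p^*}$, whence $\frac{1}{9L}\norm{\grad f(x_{t+1})}_{p^*}^2 \leq \frac{1}{9L}\cdot\frac{9}{4}\norm{g}_{p^*}^2 = \norm{g}_{p^*}^2/(4L) = L\norm{x_{t+1}-y_t}_p^2$. Chaining the two inequalities yields \eqref{eq:gradnorm}.

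I expect the closed-form derivation to be the main obstacle: the fractional exponents are easy to get wrong, and one must \emph{identify} the minimizing direction rather than merely verify a guessed formula. The direction-magnitude split is the device that keeps this clean — it avoids any differentiability issue at the origin (relevant exactly when $p > 2$) and reduces the argument to the definition of the dual norm together with the equality case of Hölder's inequality, leaving only the bookkeeping identity $\norm{g_t}_p = \norm{g}_{p^*}^{1/(p-1)}$. Once \eqref{eq:update} and the optimal value $-\norm{g}_{p^*}^2/(4L)$ are in hand, \eqref{eq:gradnorm} follows from a short computation with the stated constants (the $9$ is calibrated precisely so that $(3/2)^2/9 = 1/4$); it is worth noting that this argument uses only the smoothness assumption \eqref{eq:smooth} and not convexity of $f$.
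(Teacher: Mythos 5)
Your proof is correct, and it takes a genuinely different route in two places. For the closed form, the paper writes down the first-order optimality condition $\grad f(y_t)[i] = -2L\norm{x_{t+1}-y_t}_p^{2-p}\abs{x_{t+1}[i]-y_t[i]}^{p-2}(x_{t+1}[i]-y_t[i])$ and \emph{verifies} that the stated formula satisfies it; you instead \emph{derive} the minimizer via the direction--magnitude split, the dual-norm lower bound $F(w) \geq -t\norm{g}_{p^*} + Lt^2$, and the Hölder equality case. Your derivation is cleaner in that it sidesteps the differentiability of $\norm{\cdot}_p^2$ at points with vanishing coordinates and produces, as byproducts, the exact identities $\norm{x_{t+1}-y_t}_p = \norm{g}_{p^*}/(2L)$ and $\inner{g}{x_{t+1}-y_t} = -\norm{g}_{p^*}^2/(2L)$, which you then reuse. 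For the first inequality of \eqref{eq:gradnorm} your decomposition is the same as the paper's (the paper computes $\inner{\grad f(y_t)}{y_t-x_{t+1}} = 2L\norm{x_{t+1}-y_t}_p^2$ from the optimality condition, which is the same quantity you evaluate as $\norm{g}_{p^*}^2/(2L)$). For the second inequality the paper expands $\norm{\grad f(x_{t+1})}_{p^*}^2$ via the optimality condition and an auxiliary vector $\delta$ with $\norm{\delta}_{p^*} = \norm{x_{t+1}-y_t}_p^{p-1}$, whereas you simply bound $\norm{\grad f(x_{t+1})}_{p^*} \leq \tfrac{3}{2}\norm{g}_{p^*} = 3L\norm{x_{t+1}-y_t}_p$ by the triangle inequality and smoothness; both give the constant $9$, but your bookkeeping is lighter. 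Your closing remark that only smoothness (not convexity) is used is accurate and consistent with the paper's proof.
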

\begin{proof} \ 
    First, we note that, by first-order optimality conditions, for all $i \in \{1,\dots,d\}$,
    \begin{equation}
        \nabla f(y_t)[i] = -2L\norm{x_{t+1}-y_t}_p^{2-p}\abs{x_{t+1}[i] - y_t[i]}^{p-2}(x_{t+1}[i]-y_t[i]).
    \end{equation}
    It may be checked by verification that $x_{t+1} - y_t = - \frac{1}{2L}\norm{\grad f(y_t)}_{p^*}^{\frac{p-2}{p-1}} g_t$ from the update in Eq. \eqref{eq:update} indeed satisfies the optimality condition. Furthermore,
    \begin{align*}\langle\grad f(x_{t+1}),& y_t - x_{t+1}\rangle = \inner{\grad f(x_{t+1}) - \grad f(y_t) + \grad f(y_t)}{y_t - x_{t+1}} \\
    &= \inner{\grad f(x_{t+1}) - \grad f(y_t)}{y_t - x_{t+1}} + \inner{\grad f(y_t)}{y_t - x_{t+1}}\\
    &= \inner{\grad f(x_{t+1}) - \grad f(y_t)}{y_t - x_{t+1}} \\
    &\qquad+ 2L\norm{x_{t+1}-y_t}_p^{2-p}\sum\limits_{i=1}^d \abs{x_{t+1}[i] - y_t[i]}^{p-2}(x_{t+1}[i]-y_t[i])^2\\
        &= \inner{\grad f(x_{t+1}) - \grad f(y_t)}{y_t - x_{t+1}} + 2L\norm{x_{t+1}-y_t}_p^{2-p}\norm{x_{t+1}-y_t}_p^{p}\\
                &= \inner{\grad f(x_{t+1}) - \grad f(y_t)}{y_t - x_{t+1}} + 2L\norm{x_{t+1}-y_t}_p^2 \ .
    \end{align*}
    Next, we observe that
    \begin{align*}
        \inner{\grad f(x_{t+1}) - \grad f(y_t)}{x_{t+1} - y_{t}} &\leq \norm{\grad f(x_{t+1}) - \grad f(y_t)}_{p^*}\norm{x_{t+1} - y_{t}}_p\\
        &\leq L\norm{x_{t+1} - y_{t}}_p^2 \ ,
    \end{align*}
    which implies that $\inner{\grad f(x_{t+1}) - \grad f(y_t)}{y_t - x_{t+1}} \geq - L\norm{x_{t+1} - y_{t}}_p^2 \ $.

    Combining this with the expression from before, it follows that
    \begin{equation*}
        \langle\grad f(x_{t+1}), y_t - x_{t+1}\rangle \geq - L\norm{x_{t+1} - y_{t}}_p^2 + 2L\norm{x_{t+1}-y_t}_p^2 = L\norm{x_{t+1} - y_{t}}_p^2 \ .
    \end{equation*}

    Using again the fact that, by first-order optimality conditions, we have, for all $i \in \{1,\dots,d\}$,
        \begin{equation}
        \nabla f(y_t)[i] + 2L\norm{x_{t+1}-y_t}_p^{2-p}\abs{x_{t+1}[i] - y_t[i]}^{p-2}(x_{t+1}[i]-y_t[i]) = 0,
    \end{equation}
    it follows that, letting $\delta$ be such that $\delta[i] = \abs{x_{t+1}[i] - y_t[i]}^{p-2}(x_{t+1}[i]-y_t[i])$,
    \begin{align*}
        \norm{\grad f(x_{t+1})}_{p^*}^2 &= \norm{\grad f(x_{t+1}) - \grad f(y_t) - 2 L\norm{x_{t+1}-y_t}_p^{2-p}\delta}_{p^*}^2\\
        &\leq \pa{\norm{\grad f(x_{t+1}) - \grad f(y_t)}_{p^*} + \norm{2L\norm{x_{t+1}-y_t}_p^{2-p}\delta}_{p^*}}^2\\
        &\leq \norm{\grad f(x_{t+1}) - \grad f(y_t)}_{p^*}^2 + \norm{2L\norm{x_{t+1}-y_t}_p^{2-p}\delta}_{p^*}^2 \\
        &\quad + 2\norm{\grad f(x_{t+1}) - \grad f(y_t)}_{p^*}\norm{2L\norm{x_{t+1}-y_t}_p^{2-p}\delta}_{p^*}\\
        &\leq L^2\norm{x_{t+1} - y_t}_{p}^2 + 4L^2\norm{x_{t+1}-y_t}_p^{2(2-p)}\norm{\delta}_{p^*}^2 + 4L^2\norm{x_{t+1}-y_t}_p^{3-p}\norm{\delta}_{p^*}\\
        &= 9L^2\norm{x_{t+1} - y_t}_{p}^2\quad,
    \end{align*}
    where the final equality used the fact that
    \begin{align*}
        \norm{\delta}_{p^*}^2 = \pa{\sum\limits_{i=1}^d \pa{\abs{x_{t+1}[i]-y_t[i]}^{p-1}}^{\frac{p}{p-1}}}^{\frac{2(p-1)}{p}} = \pa{\norm{x_{t+1}-y_t}_p^p}^{\frac{2(p-1)}{p}} = \norm{x_{t+1}-y_t}_p^{2(p-1)}.
    \end{align*}
    Combining these, it follows that $\langle\grad f(x_{t+1}), y_t - x_{t+1}\rangle \geq L\norm{x_{t+1} - y_{t}}_p^2 \geq \frac{1}{9L} \norm{\grad f(x_{t+1})}_{p^*}^2 $.
\end{proof}

Next, we proceed via the estimate sequence analysis (as in, e.g., \citep{nesterov2018lectures}, Section~4.3), adjusting for our per-step descent guarantee in terms of the $\ell_{p^*}$ norm. 

\begin{lemma}\label{lem:recurrence}
Consider Algorithm~\ref{alg:hasd}. $\forall \ t \geq 0$, we have for $B_t = \frac{1}{18L}\sum\limits_{i=0}^{t-1} A_{i+1}\norm{\grad f(x_{i+1})}_{p^*}^2$,
\begin{equation}
A_t f(x_t) + B_t \leq \psi_t^* \defeq \min\limits_{x \in \R^d} \psi_t(x).
\end{equation} 
\end{lemma}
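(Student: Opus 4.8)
The plan is to prove the claim by induction on $t$, following the standard estimate-sequence template (e.g., \citep{nesterov2018lectures}, Section~4.3), while carefully tracking the tension between the Euclidean regularizer $\psi_0(x) = \frac{1}{2}\norm{x-x_0}_2^{2}$ and the fact that Lemma~\ref{lem:progress} delivers per-step progress measured in $\norm{\cdot}_{p^*}$.

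For the base case $t=0$, note $A_0 = 0$, the sum defining $B_0$ is empty, and $\psi_0^* = 0$, so the inequality holds with equality. For the inductive step, assume $A_t f(x_t) + B_t \le \psi_t^*$. First I would record that $\psi_t$ is $1$-strongly convex w.r.t.~$\norm{\cdot}_2$ — this holds for $\psi_0$ and is preserved because each update adds only an affine function of $x$ — so $\psi_t(x) \ge \psi_t^* + \frac{1}{2}\norm{x - v_t}_2^{2}$. Substituting this into $\psi_{t+1}(x) = \psi_t(x) + a_{t+1}[f(x_{t+1}) + \inner{\grad f(x_{t+1})}{x-x_{t+1}}]$, splitting $x - x_{t+1} = (x-v_t)+(v_t-x_{t+1})$, and minimizing the resulting Euclidean quadratic (minimizer $x = v_t - a_{t+1}\grad f(x_{t+1})$) yields
\[
\psi_{t+1}^* \ge \psi_t^* + a_{t+1}\big[f(x_{t+1}) + \inner{\grad f(x_{t+1})}{v_t - x_{t+1}}\big] - \frac{a_{t+1}^{2}}{2}\norm{\grad f(x_{t+1})}_2^{2} .
\]
Then I apply the inductive hypothesis to $\psi_t^*$, convexity of $f$ in the form $A_t f(x_t) \ge A_t f(x_{t+1}) + A_t\inner{\grad f(x_{t+1})}{x_t - x_{t+1}}$, and the identity $A_{t+1}y_t = A_t x_t + a_{t+1}v_t$ (exactly the definition of $y_t$, since $\tau_t = a_{t+1}/A_{t+1}$) to consolidate the linear terms, obtaining
\[
\psi_{t+1}^* \ge A_{t+1}f(x_{t+1}) + B_t + A_{t+1}\inner{\grad f(x_{t+1})}{y_t - x_{t+1}} - \frac{a_{t+1}^{2}}{2}\norm{\grad f(x_{t+1})}_2^{2} .
\]

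Since $B_{t+1} - B_t = \frac{1}{18L}A_{t+1}\norm{\grad f(x_{t+1})}_{p^*}^{2}$, it remains to show the last two terms above are at least this quantity. This is where the two norms and the implicit $\rho_t$ enter: Lemma~\ref{lem:progress} gives $\inner{\grad f(x_{t+1})}{y_t - x_{t+1}} \ge \frac{1}{9L}\norm{\grad f(x_{t+1})}_{p^*}^{2}$, so it suffices that $\frac{a_{t+1}^{2}}{2}\norm{\grad f(x_{t+1})}_2^{2} \le \frac{1}{18L}A_{t+1}\norm{\grad f(x_{t+1})}_{p^*}^{2}$; plugging in the algorithm's defining relation $a_{t+1}^{2} = A_{t+1}/(18L\rho_t)$ reduces this precisely to $\rho_t \ge \frac{1}{2}\,\norm{\grad f(x_{t+1})}_2^{2}/\norm{\grad f(x_{t+1})}_{p^*}^{2}$, i.e.~the lower bound on $\rho_t$ imposed in Algorithm~\ref{alg:hasd}. (Only this lower bound is needed here; the upper bound on $\rho_t$ will be used elsewhere, to control the growth of $A_t$.)

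The one genuinely delicate point — and the reason the algorithm cannot use a preset interpolation parameter — is this norm mismatch: the error term $\frac{a_{t+1}^{2}}{2}\norm{\grad f(x_{t+1})}_2^{2}$ is forced to be Euclidean by the $\ell_2$ regularizer, while the gain $\frac{1}{9L}\norm{\grad f(x_{t+1})}_{p^*}^{2}$ from the steepest-descent step lives in $\ell_{p^*}$. Making the former absorbable by the latter is exactly what couples the step size $a_{t+1}$ to the local ratio $\norm{\grad f(x_{t+1})}_2^{2}/\norm{\grad f(x_{t+1})}_{p^*}^{2}$, a quantity available only \emph{after} $x_{t+1}$ (hence $\rho_t$) is chosen — which is what necessitates the implicit, line-search-style determination of the pair $(\rho_t, x_{t+1})$. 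Once the selection rule for $\rho_t$ is in force, the remainder of the induction is routine algebra.
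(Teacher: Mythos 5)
Your proposal is correct and follows essentially the same route as the paper's proof: induction with the $1$-strong convexity of $\psi_t$ w.r.t.~$\norm{\cdot}_2$, convexity of $f$, the identity $A_{t+1}y_t = A_t x_t + a_{t+1}v_t$, the bound $\inner{\grad f(x_{t+1})}{y_t - x_{t+1}} \geq \frac{1}{9L}\norm{\grad f(x_{t+1})}_{p^*}^2$ from Lemma~\ref{lem:progress}, and the lower bound on $\rho_t$ to absorb the Euclidean error term $\frac{a_{t+1}^2}{2}\norm{\grad f(x_{t+1})}_2^2$. The only (immaterial) difference is that you minimize the Euclidean quadratic before invoking convexity and the coupling identity, whereas the paper does these steps in the opposite order.
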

\begin{proof} \ 
We proceed with a proof by induction. First we observe that for the base case $t = 0$, the inequality holds as both sides are 0. Next, suppose the inequality holds for some $t > 0$. Then, for any $x \in \R^d$, we have
\begin{align*}
\psi_{t+1}(x) &\geq \psi_t^* + \frac{1}{2}\norm{x-v_t}_2^{2} + a_{t+1}[f(x_{t+1})+\inner{\grad f(x_{t+1})}{x-x_{t+1}}]\\
& \geq A_t f(x_t) + B_t + \frac{1}{2}\norm{x-v_t}_2^{2} + a_{t+1}[f(x_{t+1})+\inner{\grad f(x_{t+1})}{x-x_{t+1}}]\\
&\geq A_{t+1} f(x_{t+1}) + B_t + \frac{1}{2}\norm{x-v_t}_2^{2} + \inner{\grad f(x_{t+1})}{A_t(x_t-x_{t+1}) + a_{t+1}(x-x_{t+1})}\\
&=A_{t+1} f(x_{t+1}) + B_t + \frac{1}{2}\norm{x-v_t}_2^{2} + \inner{\grad f(x_{t+1})}{a_{t+1}(x-v_t) + A_{t+1}(y_t-x_{t+1})}.
\end{align*}

Next, letting $m(x) \defeq \frac{1}{2}\norm{x-v_t}_2^{2} + a_{t+1}\inner{\grad f(x_{t+1})}{x-v_t}$, it follows that, for all $x \in \R^d$, $m(x) \geq -\frac{1}{2}a_{t+1}^{2}\norm{\grad f(x_{t+1})}_2^{2}$. Therefore, we may observe  that 
\begin{align*}
\psi_{t+1}^* &\geq A_{t+1} f(x_{t+1}) + B_t - \frac{1}{2}a_{t+1}^{2}\norm{\grad f(x_{t+1})}^{2} + A_{t+1}\inner{\grad f(x_{t+1})}{y_t - x_{t+1}}\\
&= A_{t+1} f(x_{t+1}) + B_t - \frac{A_{t+1}}{36L \rho_t}\norm{\grad f(x_{t+1})}_2^{2} + A_{t+1}\inner{\grad f(x_{t+1})}{y_t - x_{t+1}}\\
&\geq A_{t+1} f(x_{t+1}) + B_t - \frac{A_{t+1}}{36L \rho_t}\norm{\grad f(x_{t+1})}_2^{2} + \frac{A_{t+1}}{9L}\norm{\grad f(x_{t+1})}_{p^*}^2\\
&\geq A_{t+1} f(x_{t+1}) + B_t - \frac{A_{t+1}}{18L}\norm{\grad f(x_{t+1})}_{p^*}^{2} + \frac{A_{t+1}}{9L}\norm{\grad f(x_{t+1})}_{p^*}^2\\
& = A_{t+1} f(x_{t+1}) + B_t + \frac{A_{t+1}}{18L}\norm{\grad f(x_{t+1})}_{p^*}^{2}\\
& = A_{t+1} f(x_{t+1}) + B_{t+1},
\end{align*}
where the first equality follows from the algorithm that $a_{t+1}^2 = \frac{A_t + A_{t+1}}{18L\rho_t}$, the second inequality follows from Lemma~\ref{lem:progress}, and the last inequality follows from the fact that $\rho_t \geq \frac{1}{2}\frac{\norm{\grad f(x_{t+1})}_2^{2}}{\norm{\grad f(x_{t+1})}_{p^*}^{2}}$.
\end{proof}

\begin{lemma}\label{lem:growth}
    Let $\mathcal{G}_0 = 0$ and $\mathcal{G}_t \defeq \frac{1}{t}\sum\limits_{i=0}^{t-1}\frac{\norm{\grad f(x_{t+1})}_{p^*}}{\norm{\grad f(x_{t+1})}_2}$ for $t > 0$. Then, for all $t\geq 0$, we have 
    \begin{equation*}
    A_t^{1/2} \geq \frac{1}{18L^{1/2}}\sum\limits_{i=0}^{t-1}\frac{\norm{\grad f(x_{i+1})}_{p^*}}{\norm{\grad f(x_{i+1})}_2}  = \frac{t}{18L^{1/2}}\pa{\frac{1}{t}\sum\limits_{i=0}^{t-1}\frac{\norm{\grad f(x_{i+1})}_{p^*}}{\norm{\grad f(x_{i+1})}_2} } = \frac{\mathcal{G}_t t}{18L^{1/2}}.
    \end{equation*}
\end{lemma}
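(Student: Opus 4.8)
The plan is to carry out the standard estimate-sequence growth argument (as in, e.g., \citep{nesterov2018lectures}, Section~4.3), adapted to the implicitly chosen $\rho_t$, by telescoping a per-step lower bound on $\sqrt{A_{t+1}} - \sqrt{A_t}$. First I would use that $A_{t+1} = A_t + a_{t+1} \geq A_t \geq 0$ (with $A_0 = 0$), so $\sqrt{A_{t+1}} + \sqrt{A_t} \leq 2\sqrt{A_{t+1}}$, and hence
\begin{equation*}
\sqrt{A_{t+1}} - \sqrt{A_t} = \frac{A_{t+1} - A_t}{\sqrt{A_{t+1}} + \sqrt{A_t}} = \frac{a_{t+1}}{\sqrt{A_{t+1}} + \sqrt{A_t}} \geq \frac{a_{t+1}}{2\sqrt{A_{t+1}}}.
\end{equation*}

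Next I would substitute the defining relation $a_{t+1}^2 = \frac{A_t + a_{t+1}}{18L\rho_t} = \frac{A_{t+1}}{18L\rho_t}$ from Algorithm~\ref{alg:hasd}; since $a_{t+1}, A_{t+1}, \rho_t > 0$, this gives $a_{t+1} = \sqrt{A_{t+1}}/\sqrt{18L\rho_t}$, so the increment simplifies to $\sqrt{A_{t+1}} - \sqrt{A_t} \geq \frac{1}{2\sqrt{18L\rho_t}}$. I would then invoke the upper bound $\rho_t \leq 2\norm{\grad f(x_{t+1})}_2^2/\norm{\grad f(x_{t+1})}_{p^*}^2$ enforced by the algorithm, which yields
\begin{equation*}
\sqrt{A_{t+1}} - \sqrt{A_t} \geq \frac{1}{2\sqrt{36L}} \cdot \frac{\norm{\grad f(x_{t+1})}_{p^*}}{\norm{\grad f(x_{t+1})}_2} = \frac{1}{12\sqrt{L}} \cdot \frac{\norm{\grad f(x_{t+1})}_{p^*}}{\norm{\grad f(x_{t+1})}_2} \geq \frac{1}{18\sqrt{L}} \cdot \frac{\norm{\grad f(x_{t+1})}_{p^*}}{\norm{\grad f(x_{t+1})}_2}.
\end{equation*}

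Finally I would sum this inequality over $i = 0, \dots, t-1$: the left-hand side telescopes to $\sqrt{A_t} - \sqrt{A_0} = \sqrt{A_t}$, which is exactly the claimed bound, and the two equivalent forms in the statement then follow by pulling a factor of $t$ out of the sum to match the definition of $\mathcal{G}_t$ (the $t = 0$ case being trivial since both sides vanish). I do not anticipate a genuine obstacle here; the only care needed is bookkeeping: confirming that $a_{t+1}, A_{t+1}, \rho_t$ are strictly positive so that the square roots and the identity $a_{t+1}^2 = A_{t+1}/(18L\rho_t)$ are legitimate, and using the monotonicity $A_t \leq A_{t+1}$ to pass from $\sqrt{A_{t+1}} + \sqrt{A_t}$ to $2\sqrt{A_{t+1}}$, both of which are immediate from the algorithm. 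The only substantive step is the substitution of the upper bound on $\rho_t$, which is what converts the generic $\Theta(1/\sqrt{L})$ growth into one scaled by the norm ratios $\norm{\grad f(x_{i+1})}_{p^*}/\norm{\grad f(x_{i+1})}_2$.
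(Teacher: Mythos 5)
Your proposal is correct and follows essentially the same argument as the paper: both lower-bound the increment $\sqrt{A_{t+1}}-\sqrt{A_t}=a_{t+1}/(\sqrt{A_{t+1}}+\sqrt{A_t})$ using $a_{t+1}^2=A_{t+1}/(18L\rho_t)$ and $A_t\le A_{t+1}$, then apply the upper bound on $\rho_t$ and telescope (the paper phrases the telescoping as an induction, and your constant $1/12$ versus its $1/(9\sqrt{2})$ both relax to the stated $1/18$).
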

\begin{proof} \
    We proceed with a proof by induction. For $t=0$, $A_0 = 0$, and so the inequality holds. Suppose for $t > 0$, $A_t^{1/2} \geq  \frac{1}{18L^{1/2}}\sum\limits_{i=0}^{t-1}\frac{\norm{\grad f(x_{i+1})}_{p^*}}{\norm{\grad f(x_{i+1})}_2}$.
    Observe that
        \begin{equation}
        A_{t+1}^{1/2} - A_{t}^{1/2} = \frac{a_{t+1}}{A_{t+1}^{1/2} + A_{t}^{1/2}} = \frac{1}{A_{t+1}^{1/2} + A_{t}^{1/2}}\pa{\frac{A_{t+1}}{18L\rho_t}}^{1/2} \geq \frac{1}{9L^{1/2}\rho_t^{1/2}}.
    \end{equation}
    Thus, we have that 
    \begin{align*}
        A_{t+1}^{1/2} &\geq A_{t}^{1/2} + \frac{1}{9L^{1/2}\rho_t^{1/2}} \geq \frac{1}{18L^{1/2}}\sum\limits_{i=0}^{t-1}\frac{\norm{\grad f(x_{t+1})}_{p^*}}{\norm{\grad f(x_{t+1})}_2} + \frac{1}{9L^{1/2}\rho_t^{1/2}}\\
        &\geq \frac{1}{18L^{1/2}}\sum\limits_{i=0}^{t-1}\frac{\norm{\grad f(x_{i+1})}_{p^*}}{\norm{\grad f(x_{i+1})}_2} + \frac{1}{18L^{1/2}} \frac{\norm{\grad f(x_{t+1})}_{p^*}}{\norm{\grad f(x_{t+1})}_2} 
        = \frac{1}{18L^{1/2}}\sum\limits_{i=0}^{t}\frac{\norm{\grad f(x_{i+1})}_{p^*}}{\norm{\grad f(x_{i+1})}_2},
    \end{align*}
    where the second inequality follows from our inductive hypothesis, and the final inequality follows from the fact that $\rho_t \leq 2\frac{\norm{\grad f(x_{t+1})}_2^{2}}{\norm{\grad f(x_{t+1})}_{p^*}^{2}}$, which yields the desired result.
\end{proof}

We now have the requisite tools to prove the main theorem of our work.
\begin{theorem}[Main theorem]\label{thm:main}
Let $f$ be convex and $L$-smooth w.r.t.~$\norm{\cdot}_p\ $. Then, after $T > 0$ iterations, and letting $\mathcal{G} \defeq \frac{1}{T}\sum\limits_{t=0}^{T-1} \frac{\norm{\grad f(x_{t+1})}_{p^*}}{\norm{\grad f(x_{t+1})}_2}$, it holds that \emph{\textbf{HASD}} (Algorithm~\ref{alg:hasd}) outputs $x_T$ such that
\begin{equation}
    f(x_T) - f(x^*) \leq \frac{324L\norm{x_0-x^*}_2^2}{\mathcal{G}^2 T^2}.
\end{equation}
\end{theorem}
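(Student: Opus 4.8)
The plan is to combine the three lemmas above via the standard estimate-sequence template, the only twist being to track how the $\ell_{p^*}$-based progress term $B_t$ interacts with the $\ell_2$ distance built into $\psi_0$. First I would obtain an \emph{upper} bound on $\psi_T^* = \min_{x \in \R^d}\psi_T(x)$ to complement the lower bound supplied by Lemma~\ref{lem:recurrence}. Unrolling the recursion for $\psi_T$ gives $\psi_T(x) = \frac{1}{2}\norm{x - x_0}_2^2 + \sum_{i=0}^{T-1} a_{i+1}\brk*{f(x_{i+1}) + \inner{\grad f(x_{i+1})}{x - x_{i+1}}}$, and by convexity of $f$ each bracketed term is at most $f(x)$; since $\sum_{i=0}^{T-1} a_{i+1} = A_T$, evaluating at $x = x^*$ yields $\psi_T^* \leq \psi_T(x^*) \leq A_T f(x^*) + \frac{1}{2}\norm{x_0 - x^*}_2^2$.

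Next I would chain this with Lemma~\ref{lem:recurrence}, which states $A_T f(x_T) + B_T \leq \psi_T^*$ with $B_T \geq 0$. Discarding the nonnegative term $B_T$ and rearranging produces the clean intermediate bound $f(x_T) - f(x^*) \leq \frac{\norm{x_0 - x^*}_2^2}{2 A_T}$, formally identical to the Euclidean case; all the non-Euclidean structure has been pushed into the growth rate of $A_T$. Finally I would invoke Lemma~\ref{lem:growth} with $t = T$: since $A_T^{1/2} \geq \frac{\mathcal{G} T}{18 L^{1/2}}$ with $\mathcal{G} = \mathcal{G}_T$, squaring gives $A_T \geq \frac{\mathcal{G}^2 T^2}{324 L}$, and substituting yields $f(x_T) - f(x^*) \leq \frac{324 L \norm{x_0 - x^*}_2^2}{2\,\mathcal{G}^2 T^2} \leq \frac{324 L \norm{x_0 - x^*}_2^2}{\mathcal{G}^2 T^2}$, as claimed.

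The theorem statement itself is thus a short assembly; the genuine content sits in the lemmas — especially Lemma~\ref{lem:progress}, where the subsequent-gradient inequality $\inner{\grad f(x_{t+1})}{y_t - x_{t+1}} \geq \frac{1}{9L}\norm{\grad f(x_{t+1})}_{p^*}^2$ is established — and in the (deferred) fact that a valid pair $(\rho_t, x_{t+1})$ satisfying the simultaneous conditions of Algorithm~\ref{alg:hasd} always exists, which is the actual crux and the reason for the binary-search discussion in Section~\ref{sec:binarysearch}. Within the proof of the theorem proper, the only place that calls for care is confirming that $B_t$, which carries the $\ell_{p^*}$ gradient norms, can simply be dropped, so that the final rate depends on the data-dependent quantity $\mathcal{G}$ purely through $A_T$; I would also double-check the constant bookkeeping (the factors of $18$ and $9$ propagating from Lemmas~\ref{lem:progress} and~\ref{lem:growth} into the $324$), but I expect no real obstacle here.
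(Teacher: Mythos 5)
Your proposal is correct and follows essentially the same route as the paper's own proof: upper-bound $\psi_T^*$ at $x^*$ via convexity, lower-bound it via Lemma~\ref{lem:recurrence} (dropping $B_T \geq 0$), and conclude with the growth bound $A_T \geq \mathcal{G}^2 T^2/(324L)$ from Lemma~\ref{lem:growth}. Your version even keeps the extra factor of $2$ in the denominator before relaxing to the stated constant, which matches the paper's bookkeeping.
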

\begin{proof} \ By convexity, $\forall \ x \in \R^d$, $\forall \ t \in [T]$,
$$\psi_t(x) \leq \psi_{t-1}(x) + a_tf(x) \leq \psi_0(x) + \sum_{i=1}^t a_i f(x) = \frac{1}{2}\norm{x-x_0}_2^2 + A_t f\pa{x}.$$ Further by Lemma \ref{lem:recurrence}, $$A_T f(x_T) \leq \psi_T^\ast \leq \psi_T(x^*) \leq \frac{1}{2}\norm{x^*-x_0}_2^2 + A_T f\pa{x^*},$$ which yields $f(x_T) - f(x^*) \leq \frac{\norm{x_0-x^\ast}_2^2}{2A_T}$. Applying Lemma \ref{lem:growth} completes the proof.
\end{proof}
We would note that the key difference between the rate of Theorem~\ref{thm:main} and that as presented in, e.g., \citep{allen2017linear}, is precisely the addition of the $\mathcal{G}^2$ term, \emph{which may be as large as $d^{1-\frac{2}{p}}$} (and which is furthermore always $\geq 1$). We would also note that there exists an instance of minimizing the softmax function $f^s$, as shown in Appendix \ref{app:softmax}, for which $\mathcal{G}^2 = d^{1-\frac{2}{p}}$, thus yielding the rate, when $p = \infty$, $f^s\pa{x_T} - f^s\pa{x^*} \leq \frac{324L_s\norm{x_0-x^*}_2^2}{d T^2} \leq \frac{324L_s\norm{x_0-x^*}_\infty^2}{ T^2}$ where $L_s \in \mc{O}\pa{\eps^{-1}}$ when approximating $\ell_\infty$ regression, thereby matching, for this particular instance,\footnote{
While we include this softmax example to provide an instance where $O(d^{1-\frac{2}{p}})$ acceleration is achieved, it remains to be investigated how to further incorporate the affine transformation for more general $\ell_\infty$ regression.}the rate given by \citep{Sherman2017Area, sidford2018coordinate, cohen2021relative}. We cannot, however, achieve rates of this sort for general $\ell_p$-smooth functions, due to the lower bound \citep{guzman2015lower}.

\subsection{Complexity of Binary Search} \label{sec:binarysearch}
In this section, we characterize the complexity of binary search to simultaneously find a pair of $\rho_t$ and $x_{t+1}$ that satisfies the implicit relation of $\frac{1}{2}\frac{\norm{\grad f(x_{t+1})}_2^{2}}{\norm{\grad f(x_{t+1})}_{p^*}^{2}} \leq \rho_t \leq 2\frac{\norm{\grad f(x_{t+1})}_2^{2}}{\norm{\grad f(x_{t+1})}_{p^*}^{2}}$ at each iteration. We show in the following theorem that such binary search takes at most $\mc{O}\pa{\log\pa{d} + \log\pa{\frac{1}{\eps}}}$ calls to the first-order steepest descent oracle. The proof follows the general framework of Theorem 18 in \citep{bubeck2019near}, with several non-trivial technical modifications to accommodate the $\ell_p$ norm and the specific formulation of the condition, which involves the ratio of the $\ell_2$ and $\ell_{p^*}$ norms of the gradient. We provide a proof sketch and defer the complete proof to Appendix \ref{app:binarysearch}.

\begin{restatable}{theorem}{thmbinarysearch} \label{thm:binarysearch}
    For any iteration $t$ in Algorithm \ref{alg:hasd}, with at most $9+\frac{5(p-2)}{2p}\log_2\pa{d} + \log_2\pa{\frac{LD_R}{\eps}}$ calls to the first-order $\ell_p$ steepest descent oracle, we can find either a point $x_{t+1}$ such that $f(x_{t+1}) - f(x^\ast) \leq \eps$ or a pair of $\rho_t$, $x_{t+1}$ that satisfies the condition  $\frac{1}{2}\frac{\norm{\grad f(x_{t+1})}_2^{2}}{\norm{\grad f(x_{t+1})}_{p^*}^{2}} \leq \rho_t \leq 2\frac{\norm{\grad f(x_{t+1})}_2^{2}}{\norm{\grad f(x_{t+1})}_{p^*}^{2}}$ for $0 < \eps \leq \frac{LD_R}{6}$ where $D_R = \pa{R + 1458R^2}\pa{20R + 4374R^2}$ for $R = \norm{x_0-x^*}_2$.
\end{restatable}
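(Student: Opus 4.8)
The plan is to recast the simultaneous determination of $(\rho_t,x_{t+1})$ as a one-dimensional root-localization problem in the single variable $\rho_t$ and solve it by bisection on a logarithmic grid. The case $t=0$ is immediate: $A_0=0$ forces $a_1=1/(18L\rho_0)$, hence $\tau_0=a_1/(A_0+a_1)=1$, so $y_0=v_0=x_0$ regardless of $\rho_0$; one oracle call forms $x_1$ from $x_0$, and we set $\rho_0\defeq\norm{\grad f(x_1)}_2^2/\norm{\grad f(x_1)}_{p^*}^2$. For $t\ge 1$ (so $A_t>0$), the map $\rho\mapsto\tau$ determined by $a=\tfrac{\tau A_t}{1-\tau}$ and $\rho=\tfrac{1-\tau}{18LA_t\tau^2}$ is a continuous strictly decreasing bijection $(0,\infty)\to(0,1)$, so the whole step — $\tau_t$, $y_t=(1-\tau_t)x_t+\tau_t v_t$, and by Lemma~\ref{lem:progress} also $x_{t+1}$ and $\grad f(x_{t+1})$ — is a continuous function of $\rho$. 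Writing $q(\rho)\defeq\norm{\grad f(x_{t+1})}_2^2/\norm{\grad f(x_{t+1})}_{p^*}^2$, the target condition is $\rho/q(\rho)\in[\tfrac12,2]$, and the norm inequalities of Section~\ref{sec:prelims} give the parameter-free bound $d^{\frac{2}{p}-1}\le q(\rho)\le 1$. Hence every admissible $\rho$ lies in the fixed interval $[\tfrac12 d^{\frac{2}{p}-1},\,2]$ (from $q\le 1$ we get $\rho/q\ge\rho$, so $\rho\le 2$; from $q\ge d^{\frac{2}{p}-1}$ we get $\rho/q\le\rho\,d^{1-\frac{2}{p}}$, so $\rho\ge\tfrac12 d^{\frac{2}{p}-1}$), and since $\rho\mapsto\rho/q(\rho)$ is continuous with $\rho/q\le\tfrac12$ at the left end and $\rho/q\ge 2$ at the right end, the intermediate value theorem produces an admissible $\rho^*$ inside; this a-priori bracket has base-$2$ log-width $2+(1-\tfrac{2}{p})\log_2 d$.

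The bisection runs on $\log\rho$ inside that bracket, maintaining $[\rho_-,\rho_+]$ with $\rho_-/q(\rho_-)\le\tfrac12$ and $\rho_+/q(\rho_+)\ge 2$ (so an admissible $\rho$ lies strictly inside, unless one endpoint already meets the condition). Each step evaluates $\rho/q$ at the geometric midpoint using a constant number of steepest-descent oracle calls (one forming the candidate $x_{t+1}$ from the induced $y_t$, one reading off the norms of $\grad f(x_{t+1})$), returns the pair $(\rho,x_{t+1})$ if $\rho/q\in[\tfrac12,2]$, and otherwise replaces the appropriate endpoint by the midpoint, halving $\log_2(\rho_+/\rho_-)$. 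To bound the number of steps I would lower-bound the log-width of the valid window $\{\rho:\rho/q(\rho)\in[\tfrac12,2]\}$: if $\bigl|\tfrac{d\log q}{d\log\rho}\bigr|\le\Lambda$ then $\tfrac{d\log(\rho/q)}{d\log\rho}\in[1-\Lambda,1+\Lambda]$, so $\rho/q$ is $(1+\Lambda)$-Lipschitz on the log scale and the window has log-width at least $\tfrac{\log 4}{1+\Lambda}$; bisecting from the a-priori bracket down to that resolution costs $\log_2\!\bigl((2+(1-\tfrac{2}{p})\log_2 d)(1+\Lambda)\bigr)+O(1)$ steps.

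What remains — and what I expect to be the main obstacle — is bounding $\Lambda$, the logarithmic sensitivity of $q$. I would do this by propagating a perturbation of $\rho$ along the chain $\rho\to\tau\to y_t=(1-\tau)x_t+\tau v_t\to x_{t+1}\to\grad f(x_{t+1})$: the first two links are explicit, the third and fourth use $L$-smoothness w.r.t.\ $\norm{\cdot}_p$ and the closed form \eqref{eq:update} of the steepest-descent step (whose Jacobian carries negative powers of $\grad f(y_t)$), and passing from $\grad f(x_{t+1})$ to $\log q$ requires a lower bound on $\norm{\grad f(x_{t+1})}$. That lower bound is available unless $f(x_{t+1})-f(x^*)\le\norm{\grad f(x_{t+1})}_{p^*}\norm{x_{t+1}-x^*}_p\le\eps$, in which case the procedure halts and returns $x_{t+1}$; the boundedness of $\norm{x_{t+1}-x^*}_2$ and $\norm{v_t-x_t}_2$ needed here follows from $1$-strong convexity of $\psi_t$ together with Lemma~\ref{lem:recurrence}, and is $\mathrm{poly}(R)$ for $R\defeq\norm{x_0-x^*}_2$. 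Altogether $\Lambda=\mathrm{poly}(L,R,1/\eps,d)$, and carefully tracking every $\ell_2$-to-$\ell_p$ conversion factor $d^{\frac12-\frac1p}$ — it enters through the range of $q$, through $\norm{x_{t+1}-x^*}_p$ and $\norm{v_t-x_t}_p$, and through the norm bounds on the steepest-descent Jacobian — collapses the step count into the claimed $9+\tfrac{5(p-2)}{2p}\log_2 d+\log_2(LD_R/\eps)$, with $D_R=(R+1458R^2)(20R+4374R^2)$ emerging as the product of the resulting distance and inverse-gradient bounds.

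The conceptual point, and the reason for the ``non-trivial technical modifications'' relative to Theorem~18 in \citep{bubeck2019near}, is that the single-norm line searches of \citep{monteiro2013accelerated,bubeck2019near} have a test quantity that is monotone in the search parameter, which alone forces bisection progress, whereas here the test $q(\rho)$ is a \emph{ratio} of two norms of the same gradient and need not be monotone; progress must instead be extracted from a quantitative Lipschitz bound on $\log q$, with the degenerate small-gradient regime deliberately absorbed by the $\eps$-accuracy clause, and getting the sharp $d$-exponent $\tfrac{5(p-2)}{2p}$ depends on keeping exact track of each norm conversion along the propagation chain.
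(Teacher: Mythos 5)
Your plan follows the same architecture as the paper's proof: reduce the simultaneous determination of $(\rho_t,x_{t+1})$ to a one-dimensional search (you use $\log\rho$ where the paper uses $\theta=A_t/A_{t+1}$, a cosmetic difference), invoke the intermediate value theorem on a continuous test ratio, bisect, and convert a Lipschitz bound on the log of the test quantity into a step count, with the degenerate small-gradient regime absorbed by the $\eps$-optimality escape clause. Your a-priori bracket $\rho\in[\tfrac12 d^{\frac2p-1},2]$ and your treatment of $t=0$ are both correct and clean. However, the proof is incomplete at exactly the point you flag as ``the main obstacle'': the quantitative bound $\Lambda$ on the logarithmic sensitivity of $q$ is asserted to be $\mathrm{poly}(L,R,1/\eps,d)$ and then declared to ``collapse'' into the stated exponent $\tfrac{5(p-2)}{2p}$ and constant $D_R$, but none of this is derived. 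The paper spends essentially all of its effort here: (i) $\mathrm{poly}(R)$ bounds on $\norm{x_t-x^*}_p$ and $\norm{v_t-x_t}_p$ extracted from the estimate-sequence recurrence (these produce the $2916R^2$-type constants); (ii) a lower bound on $\norm{\grad f(x_\theta)}_{p^*}$ in terms of $f(x_\theta)-f(x^*)$ via convexity; and (iii) a bound on $\norm{\tfrac{d}{d\theta}x_{t+1}}_p$. Without carrying these out, the specific step count in the theorem statement cannot be established.

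Moreover, the route you sketch for the hardest sub-step (iii) would fail as written. You propose differentiating the closed form \eqref{eq:update}, whose Jacobian ``carries negative powers of $\grad f(y_t)$'' --- but those are \emph{coordinatewise} factors $\abs{\grad f(y_t)[i]}^{-\frac{p-2}{p-1}}$, which blow up whenever an individual coordinate of the gradient vanishes, and no global lower bound on $\norm{\grad f}_{p^*}$ (which is all the $\eps$-escape clause provides) controls them. The paper avoids this by applying the implicit function theorem to the first-order optimality condition of the proximal subproblem, expressing $\tfrac{d}{d\theta}x_\theta=\bigl(\grad^2_x(L\norm{x_\theta-y_\theta}_p^2)\bigr)^{-1}\grad^2 f(y_\theta)(v_t-x_t)$, and then establishing the nontrivial spectral fact that $\grad^2_z\norm{z}_p^2$ decomposes as a PSD matrix plus a rank-one term, yielding $\norm{\grad^2_z\norm{z}_p^2}_p\geq 2d^{-\frac{p-2}{2}}$; this inversion bound is precisely where the dominant $d$-power in $\omega(\theta)$, and hence the exponent $\tfrac{5(p-2)}{2p}$ in the final count, originates. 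That ingredient is absent from your proposal, and your stated final answer cannot be reached without it or an equivalent substitute.
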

\begin{proofsketch}
    Given the implicit dependence of $x_{t+1}$ on $\rho_t$ and vice versa, we make such dependence explicit by letting $\theta \coloneqq \frac{A_t}{A_{t+1}}$, so that other variables can all be expressed as a function of $\theta$, denoted as $x_\theta \coloneqq x_{t+1}$, $y_\theta \coloneqq y_t = \theta x_t + (1-\theta)v_t$, and $\rho_\theta \coloneqq \rho_t = \frac{\theta}{18L(1-\theta)^2 A_t}$. As a result, the search for $\rho_t$, $x_{t+1}$ that satisfy $\frac{1}{2}\frac{\norm{\grad f(x_{t+1})}_2^{2}}{\norm{\grad f(x_{t+1})}_{p^*}^{2}} \leq \rho_t \leq 2\frac{\norm{\grad f(x_{t+1})}_2^{2}}{\norm{\grad f(x_{t+1})}_{p^*}^{2}}$ is equivalent to finding $\theta$ such that 
    $$ \frac{1}{2} \leq \zeta\pa{\theta}\coloneqq \frac{18L(1-\theta)^2 A_t}{\theta}\frac{\norm{\grad f(x_\theta)}_2^{2}}{\norm{\grad f(x_\theta)}_{p^*}^{2}} \leq 2 .$$
    Noting that $\zeta\pa{0} = \infty$, $\zeta\pa{1} = 0$, $\exists \ \theta^\ast$ such that $\zeta\pa{\theta^\ast} = \frac{5}{4}$. Then one can use $\log_2\pa{\frac{1}{\delta}}$ binary search steps to find $\theta$ such that $\abs{\theta - \theta^\ast} \leq \delta$. It remains to verify that with certain choice of $\delta$, we indeed have $\zeta\pa{\theta} \in \bracks{\frac{1}{2}, 2}$. How the function value changes with respect to the input within some $\delta$-neighborhood is characterized by the Lipschitz constant of $\zeta\pa{\theta}$, which we analyze by showing
    $$ \abs{\frac{d}{d\theta}\log\pa{\zeta\pa{\theta}}} \leq \frac{2}{1-\theta} + \frac{1}{\theta} + \frac{4d^\frac{p-2}{2p}}{\norm{\grad f(x_\theta)}_{p^*}} \norm{\grad^2 f(x_\theta)}_p\norm{\frac{d}{d\theta}x_\theta}_p.$$
    We bound it by bounding each of the relevant terms, $\norm{\grad f(x_\theta)}_{p^*}$ by Lemma \ref{lem:grad_lower}, $\norm{\grad^2 f(x_\theta)}_p$ by Lemma \ref{lem:hessian}, and $\norm{\frac{d}{d\theta}x_\theta}_p$ by Lemma \ref{lem:d_theta}, which involves nontrivially showing for $x \in \R^d$, $\grad^2\norm{x}_p^2$ is the sum of two positive semidefinite matrices, after which it can be further simplified to  $\abs{\frac{d}{d\theta}\log\pa{\zeta\pa{\theta}}} \leq \omega\pa{\theta}\pa{1 + \frac{1}{\zeta\pa{\theta}} + \zeta\pa{\theta}}$ for $\omega\pa{\theta} = 4d^\frac{5(p-2)}{2p}\pa{6 + 9LA_t + \frac{L D_R}{f(x_\theta) - f(x^\ast)}}$ where $D_R = \pa{R + 1458R^2}\pa{20R + 4374R^2}$.
    Finally, we show in Lemma \ref{lem:zeta_range} that if the Lipschitz constant (as a function of $\theta$) is bounded as such, by properly choosing the neighborhood $\delta = \frac{\eps}{320d^\frac{5(p-2)}{2p}LD_R} \leq \frac{1}{10\omega\pa{\theta}}$, $\zeta\pa{\theta}$ falls within the range $\bracks{\frac{1}{2}, 2}$ when $\abs{\theta - \theta^\ast} \leq \delta$.
\end{proofsketch}

\section{Extensions}\label{sec:extensions}
In this section, we consider natural extensions of our method to additional related problem settings, including minimizing strongly convex objectives, as well as minimizing the $\ell_{p^*}$ norm of the gradient. The latter---explored, for example, in \citep{gratton2023adaptive, diakonikolas2024complementary}---may be of independent interest, as it allows us to deviate from the typical goal of minimizing in terms of the $\ell_2$ norm. 

\subsection{Strongly convex setting}
We begin by considering the case in which $f$ is additionally $\mu$-strongly convex, whereby combining our method with the usual restarting scheme lets us straightforwardly improve from a sublinear to a linear rate. Furthermore, it is important to note that the improvements our method offers in the smooth and (weakly) convex setting appear, in the strongly convex setting, outside of the log factor, along with the condition number of the problem. Due to space constraints, we provide the full details of our algorithm for the strongly convex setting (\textbf{HASD + Restarting}) in Appendix~\ref{app:extensions}.

Using our results from Section~\ref{sec:main}, we now arrive at the following corollary.
\begin{corollary}
    Let $\eps > 0$, let $x_{\text{outer},0} \in \R^d$, and let $K = O(\log(1/\eps))$. Consider $f$ that is $L$-smooth w.r.t.~$\norm{\cdot}_p$ and $\mu$-strongly convex w.r.t.~$\norm{\cdot}_2$. Assume that, for all $i \in \{1,\dots,K\}$, the respective average term $\mathcal{G}_i \geq \hat{\mathcal{G}} \geq 1$. Then, the method \emph{\textbf{HASD + Restarting}} (Algorithm \ref{alg:hasd_sc}) outputs $x_{\text{outer},K}$ such that
    \begin{equation}
        f(x_{outer,K}) - f(x^*) \leq \eps.
    \end{equation}
\end{corollary}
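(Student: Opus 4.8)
The plan is to invoke the standard restarting (``domain-shrinking'') reduction from the strongly convex to the weakly convex case, using Theorem~\ref{thm:main} as the per-epoch engine. Each outer iteration $i \in \{1,\dots,K\}$ of \textbf{HASD + Restarting} runs \textbf{HASD} (Algorithm~\ref{alg:hasd}) for a fixed number $T$ of inner steps starting from $x_{\text{outer},i-1}$, and sets $x_{\text{outer},i}$ to the resulting output; the goal is to show that each epoch contracts the optimality gap $f(\cdot) - f(x^*)$ by a factor of at least $2$, so that $K = O(\log(1/\eps))$ epochs suffice.

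First I would convert the distance-based guarantee of Theorem~\ref{thm:main} into a function-gap contraction. Applying $\mu$-strong convexity w.r.t.~$\norm{\cdot}_2$ to the pair $(x_{\text{outer},i-1}, x^*)$, together with $\grad f(x^*) = 0$, gives $\frac{\mu}{2}\norm{x_{\text{outer},i-1} - x^*}_2^2 \leq f(x_{\text{outer},i-1}) - f(x^*)$ — and it is precisely the $\norm{\cdot}_2$ form of strong convexity that makes this substitution match the $\norm{\cdot}_2$-based distance appearing in Theorem~\ref{thm:main}. Combining this with Theorem~\ref{thm:main} applied to epoch $i$ (whose realized average term is $\mathcal{G}_i$) yields
\begin{equation*}
f(x_{\text{outer},i}) - f(x^*) \;\leq\; \frac{324 L \norm{x_{\text{outer},i-1} - x^*}_2^2}{\mathcal{G}_i^2 T^2} \;\leq\; \frac{648 L}{\mu\, \mathcal{G}_i^2 T^2}\pa{f(x_{\text{outer},i-1}) - f(x^*)} .
\end{equation*}
Using the hypothesis $\mathcal{G}_i \geq \hat{\mathcal{G}} \geq 1$ and choosing the per-epoch length $T = \ceil{36\, \hat{\mathcal{G}}^{-1}\sqrt{L/\mu}}$ makes the prefactor at most $\tfrac12$, hence $f(x_{\text{outer},i}) - f(x^*) \leq \tfrac12\pa{f(x_{\text{outer},i-1}) - f(x^*)}$ for every $i$.

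Iterating this inequality over the $K$ epochs gives $f(x_{\text{outer},K}) - f(x^*) \leq 2^{-K}\pa{f(x_{\text{outer},0}) - f(x^*)}$, so it remains to take $K = \ceil{\log_2\!\pa{(f(x_{\text{outer},0}) - f(x^*))/\eps}} = O(\log(1/\eps))$, which is exactly the number of outer iterations asserted. As a byproduct, the total number of inner iterations is $KT = O\!\pa{\hat{\mathcal{G}}^{-1}\sqrt{L/\mu}\,\log(1/\eps)}$, so the $\mathcal{G}$-type speedup of Theorem~\ref{thm:main} appears multiplying $\sqrt{L/\mu}$ outside the logarithm, matching the claim in the surrounding discussion.

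I do not expect a genuine obstacle, as this is a textbook restarting argument; the one point requiring care is that $\mathcal{G}_i$ is a trajectory-dependent quantity rather than an a priori constant, which is exactly why the corollary posits the uniform lower bound $\hat{\mathcal{G}}$ and why it cannot be removed without further structural assumptions. One should also note that if the binary-search procedure of Theorem~\ref{thm:binarysearch} ever returns, within some epoch, a point already satisfying $f(\cdot) - f(x^*) \leq \eps$, the scheme simply terminates early, which only improves the guarantee; and that rounding $T$ up to a positive integer costs only constant factors.
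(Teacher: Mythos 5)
Your proposal is correct and follows essentially the same route as the paper's proof: apply Theorem~\ref{thm:main} per epoch, use $\mu$-strong convexity w.r.t.~$\norm{\cdot}_2$ to convert $\norm{x_{\text{outer},i}-x^*}_2^2$ into the function gap, choose $T = \frac{36}{\hat{\mathcal{G}}}\sqrt{L/\mu}$ so that each epoch halves the gap, and iterate $K = O(\log(1/\eps))$ times. The additional remarks about rounding $T$ and early termination are fine but not needed.
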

\begin{proof} \
    Note that by Theorem~\ref{thm:main}, for all $i$, it holds that
    \begin{align*}
        f(x_{\text{outer},i+1}) - f(x^*) \leq \frac{324L\norm{x_{\text{outer},i} - x^*}_2^2}{\mathcal{G}_i^2T^2}
    \end{align*}
    where $\mathcal{G}_i \defeq \frac{1}{T}\sum\limits_{t=0}^{T-1} \frac{\norm{\grad f(x_{t+1})}_{p^*}}{\norm{\grad f(x_{t+1})}_2}$ (where $x_t$ are w.r.t.~the $i^{th}$ outer iteration). By $\mu$-strong convexity, we have that $\norm{x_{\text{outer},i} - x^*}_2^2 \leq \frac{2}{\mu}(f(x_{\text{outer},i}) - f(x^*))$, and so it follows that
    \begin{equation*}
                f(x_{\text{outer},i+1}) - f(x^*) \leq \frac{648L(f(x_{\text{outer},i}) - f(x^*))}{\mu\mathcal{G}_i^2T^2}.
    \end{equation*}
    Thus, by setting $T = \frac{36}{\hat{\mathcal{G}}}\sqrt{\frac{L}{\mu}}$, we have that
        $f(x_{\text{outer},i+1}) - f(x^*) \leq \frac{f(x_{\text{outer},i}) - f(x^*)}{2}$,
    and so, since we halve the optimality gap each time, the desired result follows from the recurrence after $K = O(\log(1/\eps))$ (outer) iterations.
\end{proof}

Interestingly, due to the diameter term being $\norm{x_0-x^*}_2^2$ (i.e., in terms of the $\ell_2$ norm) in the final convergence expression for the smooth and (weakly) convex case, we similarly need the strong convexity assumption to be w.r.t.~$\ell_2$ for the analysis to hold.

\subsection{Gradient norm minimization}
As a natural consequence of the results in both the (weakly) convex and strongly convex setting, we may additionally derive guarantees in terms of minimizing the $\ell_{p^*}$ norm of the gradient, as has been considered (in the case of the $\ell_2$ norm) in both convex (e.g., \citep{nesterov2012make, allen2018make}) and non-convex (e.g., \citep{agarwal2017finding, carmon2018accelerated}) settings.

\subsubsection{First attempt: directly relating to optimality gap}
Aiming to minimize the gradient norm, we begin with the following lemma, the proof of which follows the standard transition from optimality gap to gradient norm by convexity and smoothness and may be found in Appendix \ref{app:gradnormlem}. 

\begin{restatable}{lemma}{lemgradnormmin}\label{lem:gradnormmin}
    Let $f$ be convex and $L$-smooth w.r.t.~$\norm{\cdot}_p$, and let $x \in \R^d$ be such that $f(x) - f(x^*) \leq \frac{\eps^2}{2L}.$ Then, $\norm{\grad f(x)}_{p^*} \leq \eps$.
\end{restatable}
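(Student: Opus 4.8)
The plan is to use the standard two-step argument that converts an optimality-gap bound into a gradient-norm bound, adapted to the $\ell_p$ geometry. First, I would invoke $L$-smoothness w.r.t.~$\norm{\cdot}_p$ in its descent-lemma form: for any $x$ and any $y$, $f(y) \leq f(x) + \inner{\grad f(x)}{y-x} + \frac{L}{2}\norm{y-x}_p^2$, which follows from \eqref{eq:smooth} by integrating along the segment. Applying this at the given point $x$ with the choice $y = x - \frac{1}{L}\norm{\grad f(x)}_{p^*}^{\frac{p-2}{p-1}} g$, where $g[i] \defeq \grad f(x)[i]/\abs{\grad f(x)[i]}^{\frac{p-2}{p-1}}$ (the same steepest-descent direction used in Lemma~\ref{lem:progress}, normalized so that $\inner{\grad f(x)}{x-y} = \frac{1}{L}\norm{\grad f(x)}_{p^*}^2$ and $\norm{x-y}_p^2 = \frac{1}{L^2}\norm{\grad f(x)}_{p^*}^2$), gives $f(y) \leq f(x) - \frac{1}{2L}\norm{\grad f(x)}_{p^*}^2$.

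Second, since $f(y) \geq f(x^*)$, rearranging yields $\frac{1}{2L}\norm{\grad f(x)}_{p^*}^2 \leq f(x) - f(x^*)$, i.e.~$\norm{\grad f(x)}_{p^*}^2 \leq 2L(f(x)-f(x^*))$. Combining this with the hypothesis $f(x) - f(x^*) \leq \frac{\eps^2}{2L}$ gives $\norm{\grad f(x)}_{p^*}^2 \leq 2L \cdot \frac{\eps^2}{2L} = \eps^2$, hence $\norm{\grad f(x)}_{p^*} \leq \eps$, as claimed.

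The only mild subtlety—and the place worth stating carefully rather than the main obstacle—is verifying the two norm identities for the chosen $y$, namely that $\inner{\grad f(x)}{x - y}$ equals $\frac{1}{L}\norm{\grad f(x)}_{p^*}^2$ and that $\norm{x-y}_p$ equals $\frac{1}{L}\norm{\grad f(x)}_{p^*}$; both are routine computations with conjugate exponents $\frac{1}{p} + \frac{1}{p^*} = 1$, entirely analogous to the closed-form verification already carried out in the proof of Lemma~\ref{lem:progress}. Alternatively, one can avoid writing down $y$ explicitly and instead simply use that $\min_y \crl{\inner{\grad f(x)}{y-x} + \frac{L}{2}\norm{y-x}_p^2} = -\frac{1}{2L}\norm{\grad f(x)}_{p^*}^2$, which is the Fenchel-conjugate identity for the (scaled) $\ell_p$-norm squared; this is arguably the cleanest route and reduces the whole proof to the descent lemma plus one conjugacy fact. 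There is no genuine difficulty here—the statement is a standard "gradient norm from function value" bound, and the $\ell_p$ version only requires tracking the dual norm $\norm{\cdot}_{p^*}$ in place of $\norm{\cdot}_2$ throughout.
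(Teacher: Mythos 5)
Your proof is correct and follows essentially the same route as the paper's: both take the $\ell_p$ steepest-descent test point $y = x - \frac{1}{L}\norm{\grad f(x)}_{p^*}^{\frac{p-2}{p-1}} g$, apply the descent lemma to get $f(y) \leq f(x) - \frac{1}{2L}\norm{\grad f(x)}_{p^*}^2$, and combine with $f(y) \geq f(x^*)$ (the paper phrases this last step via convexity at $x^*$ with $\grad f(x^*)=0$, which is equivalent). The norm identities you flag are exactly the routine conjugate-exponent computations the paper carries out, so there is nothing missing.
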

\noindent Combining this with our main convergence guarantee (Theorem~\ref{thm:main}) leads to the following corollary.
\begin{corollary}
    Let $R > 0$ be such that $\norm{x_0 - x^*}_2 \leq R$, and let $x_T$ be the output of \emph{\textbf{HASD}} (Algorithm~\ref{alg:hasd}) after $T = \left\lceil\frac{18\sqrt{2}LR}{\hat{\mathcal{G}}\eps}\right\rceil$ iterations, where $\hat{\mathcal{G}}$ is such that $\mathcal{G} \geq \hat{\mathcal{G}} \geq 1$. Then,
    \begin{equation*}
        \norm{\grad f(x_T)}_{p^*} \leq \eps.
    \end{equation*}
\end{corollary}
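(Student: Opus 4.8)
The plan is to chain together Lemma~\ref{lem:gradnormmin} with the main convergence bound of Theorem~\ref{thm:main}. The target is to guarantee $\norm{\grad f(x_T)}_{p^*} \leq \eps$, and Lemma~\ref{lem:gradnormmin} tells us a sufficient condition for this: it suffices to show that $f(x_T) - f(x^*) \leq \frac{\eps^2}{2L}$. So the entire argument reduces to verifying that, with the stated choice of $T$, the optimality gap from Theorem~\ref{thm:main} is below this threshold.

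First I would invoke Theorem~\ref{thm:main}, which gives $f(x_T) - f(x^*) \leq \frac{324 L \norm{x_0 - x^*}_2^2}{\mathcal{G}^2 T^2}$. Using $\norm{x_0 - x^*}_2 \leq R$ and $\mathcal{G} \geq \hat{\mathcal{G}}$, this is at most $\frac{324 L R^2}{\hat{\mathcal{G}}^2 T^2}$. Next I would substitute $T = \left\lceil \frac{18\sqrt{2} L R}{\hat{\mathcal{G}} \eps} \right\rceil \geq \frac{18\sqrt{2} L R}{\hat{\mathcal{G}} \eps}$, so that $T^2 \geq \frac{648 L^2 R^2}{\hat{\mathcal{G}}^2 \eps^2}$. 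Plugging this in, the optimality gap is bounded by
\begin{equation*}
    \frac{324 L R^2}{\hat{\mathcal{G}}^2} \cdot \frac{\hat{\mathcal{G}}^2 \eps^2}{648 L^2 R^2} = \frac{\eps^2}{2L},
\end{equation*}
which is exactly the hypothesis required by Lemma~\ref{lem:gradnormmin}. Applying that lemma then yields $\norm{\grad f(x_T)}_{p^*} \leq \eps$, completing the proof.

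Honestly, there is no serious obstacle here — the corollary is a routine composition of two already-established results, and the only thing to be careful about is bookkeeping the constants (the $324$, the $18\sqrt{2}$, and the $\frac{\eps^2}{2L}$ threshold) so that they align exactly, and handling the ceiling by the trivial lower bound $\lceil a \rceil \geq a$. The role of the assumption $\hat{\mathcal{G}} \geq 1$ is not strictly needed for this particular derivation (it cancels cleanly regardless), but it is natural to keep since $\mathcal{G} \geq 1$ always holds and the point of the result is that larger $\hat{\mathcal{G}}$ gives a smaller iteration count, inheriting the $O(d^{1-\frac{2}{p}})$ improvement from the main theorem.
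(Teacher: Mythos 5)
Your proposal is correct and is exactly the argument the paper intends (the paper states the corollary as an immediate consequence of Lemma~\ref{lem:gradnormmin} and Theorem~\ref{thm:main} without writing out the computation): the constants check out, since $T^2 \geq 648 L^2 R^2/(\hat{\mathcal{G}}^2\eps^2)$ gives an optimality gap of at most $\eps^2/(2L)$, which is precisely the threshold in the lemma. No issues.
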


\subsubsection{Improved rate: using additional $B_t$ term}
By observing more closely the recurrence relation established in Lemma~\ref{lem:recurrence}, we may further improve the rate for minimizing $\norm{\grad f(x)}_{p^*}$.
\begin{corollary}
    Let $R > 0$ be such that $\norm{x_0 - x^*}_2 \leq R$, and let $x_t$ ($t \in \{1,\dots T\}$) be the iterates generated by \emph{\textbf{HASD}} (Algorithm~\ref{alg:hasd}) after 
    $T = \left\lceil\frac{21 L^{\frac{2}{3}}R^{\frac{2}{3}}}{\hat{\mathcal{G}}^{\frac{2}{3}}\eps^{\frac{2}{3}}}\right\rceil$ iterations, where $\hat{\mathcal{G}}$ is such that, for all $t \in \{1,\dots T\}$, $\mathcal{G}_t \geq \hat{\mathcal{G}} \geq 1$. Then,
    \begin{equation*}
        \min\limits_{t \in \{1,\dots T\}} \norm{\grad f(x_t)}_{p^*} \leq \eps.
    \end{equation*}
\end{corollary}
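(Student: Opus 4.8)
The plan is to extract more from Lemma~\ref{lem:recurrence} than the bare optimality-gap bound used in the first corollary. Recall that Lemma~\ref{lem:recurrence} gives $A_T f(x_T) + B_T \leq \psi_T^*$, and the proof of Theorem~\ref{thm:main} shows $\psi_T^* \leq \psi_T(x^*) \leq \frac{1}{2}\norm{x_0-x^*}_2^2 + A_T f(x^*)$. Combining these yields not only $f(x_T) - f(x^*) \leq \frac{R^2}{2A_T}$ but also the stronger statement
\begin{equation*}
B_T = \frac{1}{18L}\sum_{i=0}^{T-1} A_{i+1}\norm{\grad f(x_{i+1})}_{p^*}^2 \;\leq\; \frac{1}{2}\norm{x_0-x^*}_2^2 \;\leq\; \frac{R^2}{2},
\end{equation*}
since $A_T f(x_T) \geq A_T f(x^*)$. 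So the first step is to record this inequality, which is essentially free given the earlier lemmas.

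Next I would lower-bound the weights $A_{i+1}$ that appear in $B_T$. From Lemma~\ref{lem:growth}, together with the assumed uniform bound $\mathcal{G}_t \geq \hat{\mathcal{G}} \geq 1$ on the running averages, we get $A_t^{1/2} \geq \frac{\mathcal{G}_t t}{18L^{1/2}} \geq \frac{\hat{\mathcal{G}} t}{18 L^{1/2}}$, hence $A_{i+1} \geq \frac{\hat{\mathcal{G}}^2 (i+1)^2}{324 L}$. Plugging this into the bound on $B_T$ gives
\begin{equation*}
\frac{\hat{\mathcal{G}}^2}{324 L \cdot 18 L}\sum_{i=0}^{T-1} (i+1)^2 \,\norm{\grad f(x_{i+1})}_{p^*}^2 \;\leq\; \frac{R^2}{2},
\end{equation*}
so that $\sum_{i=1}^{T} i^2 \norm{\grad f(x_i)}_{p^*}^2 \leq \frac{\text{const}\cdot L^2 R^2}{\hat{\mathcal{G}}^2}$ for an explicit numerical constant. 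Now I would argue that the minimum gradient norm is controlled by this weighted sum: if $\min_{1 \leq t \leq T}\norm{\grad f(x_t)}_{p^*}^2 =: G^2$, then $G^2 \sum_{i=1}^{T} i^2 \leq \sum_{i=1}^T i^2\norm{\grad f(x_i)}_{p^*}^2$, and since $\sum_{i=1}^T i^2 \geq \frac{T^3}{3}$ (or a comparable cubic lower bound), we obtain $G^2 \cdot \frac{T^3}{3} \leq \frac{\text{const}\cdot L^2 R^2}{\hat{\mathcal{G}}^2}$, i.e. $G \leq \frac{\text{const}\cdot L R}{\hat{\mathcal{G}} T^{3/2}}$. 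Setting this at most $\eps$ and solving for $T$ yields $T = \Theta\!\big((LR/(\hat{\mathcal{G}}\eps))^{2/3}\big)$, which is the claimed iteration count; the final step is to verify the explicit constant $21$ by carrying the numerical factors through carefully.

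The main obstacle is bookkeeping the numerical constants rather than any conceptual difficulty: one must chase the $18$'s and $324$'s through the chain $B_T \leq R^2/2$, $A_{i+1} \geq \hat{\mathcal{G}}^2(i+1)^2/(324L)$, and $\sum_{i=1}^T i^2 \geq T^3/3$, and confirm the ceiling in $T = \lceil 21 (LR)^{2/3}/(\hat{\mathcal{G}}\eps)^{2/3}\rceil$ is compatible with these (in particular, that using $\sum i^2 \geq T^3/3$ rather than the sharper $\sim T^3/3 + T^2/2$ still suffices). A secondary subtlety is that the bound on $B_T$ requires $A_T f(x_T) \geq A_T f(x^*)$, which holds since $x^*$ is the global minimizer; and one must make sure the per-iteration hypothesis on $\mathcal{G}_t$ is genuinely what is needed (it is, because we invoke Lemma~\ref{lem:growth}'s bound $A_t^{1/2} \geq \mathcal{G}_t t/(18\sqrt{L})$ at the specific index $i+1$ inside the sum, not merely at $t=T$).
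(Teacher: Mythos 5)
Your proposal is correct and follows essentially the same route as the paper: it extracts $B_T \le \tfrac12\norm{x_0-x^*}_2^2$ from Lemma~\ref{lem:recurrence}, lower-bounds the weights $A_{i+1}$ via Lemma~\ref{lem:growth} and the hypothesis $\mathcal{G}_t \ge \hat{\mathcal{G}}$, and compares the weighted sum to $\min_t\norm{\grad f(x_t)}_{p^*}^2 \cdot \sum_t t^2 \gtrsim T^3/3$, yielding the same constant $8748L^2R^2/(\hat{\mathcal{G}}^2T^3)$ and hence $T=\lceil 21(LR)^{2/3}/(\hat{\mathcal{G}}\eps)^{2/3}\rceil$. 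The only (immaterial) difference is the order of operations: the paper pulls the minimum out of $B_T$ first and then lower-bounds $\sum_t A_{t+1}$, whereas you substitute the lower bound on each $A_{i+1}$ first.
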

\begin{proof} \
    The corollary follows from the fact that, by Lemma~\ref{lem:recurrence}, we know 
    \begin{align*}
        \pa{\frac{1}{18L}\sum_{t=0}^{T-1}A_{t+1}}\pa{\min\limits_{t \in \{1,\dots T\}} \norm{\grad f(x_t)}_{p^*}^2} \leq B_T = \frac{1}{18L}\sum\limits_{t=0}^{T-1} A_{t+1}\norm{\grad f(x_{t+1})}_{p^*}^2 \leq \frac{1}{2}\norm{x_0 - x^*}_2^2.
    \end{align*}
    Therefore, given $A_t^{1/2} \geq  \frac{\mathcal{G}_t t}{18L^{1/2}}$ from Lemma \ref{lem:growth},
    \begin{align*}
        \min\limits_{t \in \{1,\dots T\}} \norm{\grad f(x_t)}_{p^*}^2 \leq \frac{9LR^2}{\sum_{t=0}^{T-1}A_{t+1}} 
        \leq \frac{2916L^2R^2}{\sum_{t=1}^{T} \mathcal{G}_t^2 t} 
        \leq \frac{2916L^2R^2}{\hat{\mathcal{G}}^2 \frac{T(T+1)(2T+1)}{6}} 
        \leq \frac{8748L^2R^2}{\hat{\mathcal{G}}^2 T^3}
    \end{align*}
    From $\frac{8748L^2R^2}{\hat{\mathcal{G}}^2 T^3} \leq \epsilon^2$ we solve for $T\geq \frac{21 L^{\frac{2}{3}}R^{\frac{2}{3}}}{\hat{\mathcal{G}}^{\frac{2}{3}}\eps^{\frac{2}{3}}}$.
\end{proof}

\section{Discussion}\label{sec:discussion}
We believe the relative simplicity of our algorithm affords it the opportunity to be expanded and simplified (from both theoretical \emph{and} practical perspectives), as it is a readily implementable and efficient first-order algorithm. Recent developments in sign (stochastic) gradient methods (which are a special case of steepest descent w.r.t.~$\norm{\cdot}_{\infty}$) \citep{bernstein2018signsgd, balles2020geometry, chen2023symbolic} highlight the critical importance of better understanding the interplay between acceleration and steepest descent, and we feel that our work brings additional insights and perspectives to this context. %Additionally of note is the appearance of (momentum-based) steepest descent methods in the context of deep learning and the training of Large Language Models \cite{bernstein2018signsgd, balles2020geometry, chen2023symbolic}, for which we believe our results may provide an alternative viewpoint on acceleration for steepest descent methods.

\paragraph{Comparison with \citep{monteiro2013accelerated}.} In contrast to work by \cite{nesterov2008accelerating}, which achieves a rate of $O(1/T^3)$ for accelerated cubic regularization, \cite{monteiro2013accelerated} establish an improved $\tilde{O}(1/T^{7/2})$ rate for the same (convex, second-order smooth) setting, by using the Accelerated Hybrid Proximal Extragradient (A-HPE) method. A key algorithmic difference between the two approaches lies in certain choice of regularization function. Namely, whereas \cite{nesterov2008accelerating} uses $\frac{1}{3}\norm{x-x_0}_2^3$---here, we note that the exponent is $3$, matching that of the cubic regularization term and thus \emph{maintaining a certain symmetry}---\cite{monteiro2013accelerated} instead use a \emph{quadratic} term, thereby \emph{breaking the symmetry}. Much of the subsequent analysis (which gives the near-optimal rate) is based around \emph{adjusting for this broken symmetry}.

We wish to emphasize this (high-level) observation, as our approach follows a similar means of ``symmetry-breaking,'' though instead of doing so w.r.t.~the exponent of the regularizer, we do so \emph{w.r.t.~the norm itself}. Specifically, we combine $\ell_p$ norm-based ($p \geq 2$) steepest descent steps with $\ell_2$ norm-based mirror descent steps, \emph{and algorithmically adjust for this discrepancy}. It follows that, in appropriately accounting for this adjustment, our algorithm HASD yields additional convergence gains (as made explicit in Theorem~\ref{thm:main}) \emph{beyond} what has been previously shown.

\paragraph{Practical considerations.} 
We provide in Appendix~\ref{app:experiments} preliminary experimental evidence to validate (and complement) the main guarantees of our algorithm. We would also note that, while a line search procedure is needed in theory, in practice this may likely be relaxed, by using, e.g., a heuristic procedure. Furthermore, it may be the case that the line search could be (effectively) removed altogether, as has been shown in the case of high-order optimization \citep{carmon2022optimal, kovalev2022first}, and doing so may provide an interesting future research direction.

\section{Conclusion and future work}\label{sec:conclusion}
We have presented a new method for accelerating non-Euclidean steepest descent, based on an implicit interpolation of steepest and mirror descent updates in differing norms, which offers up to $O(d^{1-\frac{2}{p}})$ improvement, in terms of iteration complexity, when considering smoothness w.r.t.~$\norm{\cdot}_p$. We believe our results suggest there are many more interesting directions yet to be explored, even in the case of \emph{first-order} acceleration. Due to the role optimization landscapes and geometry play in training deep learning models, we are also optimistic that our approach might lend itself to more practical considerations---with promising evidence of such possibilites recently presented by~\citet{luo2025stacey} for stochastic settings---though we leave a full exploration of this to future work.

Another possibility would be to consider whether we might achieve a more fine-grained analysis, as has been shown by \citet{sidford2018coordinate} for the problem of $\ell_\infty$ regression. Determining (matching) lower bounds would also help to clarify the placement of our results. In addition, given the fact that standard (Euclidean) acceleration has been extended to non-convex \citep{agarwal2017finding, carmon2018accelerated} and stochastic  \citep{ghadimi2013stochastic} settings, we believe it may be possible to extend our results in a similar manner.
As our analysis provides a principled framework for characterizing the oracle complexity of minimizing smooth convex functions under \textit{nonstandard geometries} \citep{open2015Guzman}, we suspect our techniques could also apply to more general domains whose diameters are measured in alternative $q$-norms.

\bibliography{references}

\newpage
\appendix

\section{Algorithm for Strongly Convex Setting}\label{app:extensions}
We include here the algorithm for the strongly convex setting.
\begin{algorithm}[H] 
	\caption{\textbf{HASD + Restarting}} \label{alg:hasd_sc}
	\begin{algorithmic}
        \renewcommand{\algorithmicrequire}{\textbf{Input:}} \REQUIRE $x_{\text{outer},0} \in \R^d$, $\eps > 0$, $\hat{\mathcal{G}} > 0$, $K = O(\log(1/\eps))$
        \FOR{$i=0$ {\bfseries to} $K-1$}
        \STATE $A_0 = 0$
        \STATE Define $\psi_0(x) \defeq \frac{1}{2}\norm{x-x_{i,0}}_2^{2}$.
        \STATE $x_0 = x_{outer, i}$
        \STATE $T = \frac{36}{\hat{\mathcal{G}}}\sqrt{\frac{L}{\mu}}$ 
		\FOR{$t=0$ {\bfseries to} $T-1$}
		\STATE $v_t = \argmin\limits_{x\in \R^d} \psi_t(x)$
		\STATE Determine $\rho_t > 0$, $x_{t+1} \in \R^d$ for which the following hold simultaneously:
  \STATE\begin{itemize}[leftmargin=.7in, itemsep=0pt] \vspace{4pt}
      \item $\frac{1}{2}\frac{\norm{\grad f(x_{t+1})}_2^{2}}{\norm{\grad f(x_{t+1})}_{p^*}^{2}} \leq \rho_t \leq 2\frac{\norm{\grad f(x_{t+1})}_2^{2}}{\norm{\grad f(x_{t+1})}_{p^*}^{2}}$ 
      \item $a_{t+1} > 0$ s.t. $a_{t+1}^{2} = \frac{(A_t + a_{t+1})}{18L\rho_t}$
      \item Set $A_{t+1} = A_t + a_{t+1}$, $\tau_t = \frac{a_{t+1}}{A_{t+1}}$
      \item $y_{t} = (1-\tau_t) x_t + \tau_t v_t$
      \item $x_{t+1} = \argmin\limits_{x \in \R^d} \crl{\inner{\grad f(y_{t})}{x-y_t}+L\norm{x-y_t}_p^2}$
  \end{itemize}
		\STATE $\psi_{t+1}(x) = \psi_t(x) + a_{t+1}[f(x_{t+1}) + \inner{\grad f(x_{t+1})}{x-x_{t+1}}]$
		\ENDFOR
        \STATE $x_{outer, i+1} = x_T$
        \ENDFOR
        \renewcommand{\algorithmicrequire}{\textbf{Output:}} \REQUIRE $x_{outer, K}$
	\end{algorithmic}
\end{algorithm}

\section{Example: The Softmax Function} \label{app:softmax}
Consider, as an illustration, the (symmetric) softmax objective: $$\min\limits_{x\in \mathbb{R}^d}  f^s(x) \coloneqq \alpha\log\left(\sum\limits_{i=1}^d e^{\frac{x[i]}{\alpha}} + e^{\frac{-x[i]}{\alpha}}\right) $$ For appropriate choice of $\alpha$, the softmax function closely approximates $\ell_{\infty}$ regression \citep{nesterov2005smooth}. Further, suppose we initialize at $x_0 = [1,\dots, 1]^\top \in \mathbb{R}^d$. Then we may observe that, by symmetry, the iterates of the algorithm $x_0, \dots, x_T$ satisfy $\forall \ i, j$, $\frac{\partial}{\partial x[i]}f^s(x_t) = \frac{\partial}{\partial x[j]} f^s(x_t)$ for $t = 0, \dots, T$. That is to say, $\forall \ t \in [T]$, $\frac{\Vert\nabla f^s(x_t)\Vert_{p^*}}{\Vert\nabla f^s(x_t)\Vert_2} = d^{\frac{1}{2}-\frac{1}{p}}$, and so $\mathcal{G} = d^{\frac{1}{2} - \frac{1}{p}}$.

\section{Naive Analysis for Gradient Norm Minimization} \label{app:gradnormlem}
\lemgradnormmin*
\begin{proof} \
    Let $z = x - \frac{1}{L}\norm{\grad f(x)}_{p^*}^{\frac{p-2}{p-1}}g$, where $g$ is such that $g[i] \defeq \frac{\grad f(x)[i]}{\abs{\grad f(x)[i]}^{\frac{p-2}{p-1}}}$ for all $i \in \{1,\dots,d\}$. Using our smoothness assumption, along with the fact that $\grad f(x^*) = 0$, it follows that
    \begin{align*}
        f(x^*) - f(x) &= f(x^*)-f(z) + f(z) - f(x)\\
        &\leq \inner{\grad f(x^*)}{x^*-z} + \inner{\grad f(x)}{z-x} + \frac{L}{2}\norm{z-x}_p^2\\
        &= -\frac{1}{L}\norm{\grad f(x)}_{p^*}^{\frac{p-2}{p-1}}\inner{\grad f(x)}{g} + \frac{1}{2L}\norm{\grad f(x)}_{p^*}^{\frac{2(p-2)}{p-1}}\norm{g}_p^2\\
        &= -\frac{1}{2L}\norm{\grad f(x)}_{p^*}^2,
    \end{align*}
    where the inequality follows from convexity and $L$-smoothness of $f$. Rearranging and multiplying both sides by $2L$ gives us $\norm{\grad f(x)}_{p^*}^2 \leq 2L(f(x)-f(x^*))$,
    and so, using $$f(x) - f(x^*) \leq \frac{\eps^2}{2L},$$ it follows that $$\norm{\grad f(x)}_{p^*} \leq \eps,$$ as desired.
\end{proof}

\section{Complexity of Binary Search} \label{app:binarysearch}
\subsection{Proof of Theorem \ref{thm:binarysearch}}
\thmbinarysearch*
\begin{proof} \
For every iteration of the algorithm, we seek for $\rho_t > 0$, $x_{t+1} \in \R^d$ for which the following hold simultaneously: 
        \begin{itemize}[topsep=5pt, itemsep=-2pt] 
      \item $\frac{1}{2}\frac{\norm{\grad f(x_{t+1})}_2^{2}}{\norm{\grad f(x_{t+1})}_{p^*}^{2}} \leq \rho_t \leq 2\frac{\norm{\grad f(x_{t+1})}_2^{2}}{\norm{\grad f(x_{t+1})}_{p^*}^{2}}$ 
      \item $a_{t+1} > 0$ s.t. $a_{t+1}^{2} = \frac{(A_t + a_{t+1})}{18L\rho_t}$ 
      \item Set $A_{t+1} = A_t + a_{t+1}$, $\tau_t = \frac{a_{t+1}}{A_{t+1}}$ 
      \item $y_{t} = (1-\tau_t) x_t + \tau_t v_t$ 
      \item $x_{t+1} = \argmin\limits_{x \in \R^d} \crl{\inner{\grad f(y_{t})}{x-y_t}+L\norm{x-y_t}_p^2}$
  \end{itemize}
  One can see the circular dependence of $x_{t+1}$ on $y_t$, which depends on $a_{t+1}$, which depends on $\rho_t$, which then depends on $x_{t+1}$. We break such circular dependence by letting $\theta \coloneqq \frac{A_t}{A_{t+1}}$, and express other variables as a function of $\theta$, denoted as $x_\theta \coloneqq x_{t+1}$, $y_\theta \coloneqq y_t$, and $\rho_\theta \coloneqq \rho_t$. We further denote $z_\theta \coloneqq x_\theta - y_\theta$. By definition, $\frac{a_{t+1}}{A_{t+1}} = 1-\theta$ and we have $y_\theta = \theta x_t + (1-\theta)v_t$. As a result, $\rho_\theta = \frac{A_t + a_{t+1}}{18La_{t+1}^2} = \frac{\theta}{18L(1-\theta)^2 A_t}$. All conditions are now satisfied except for the first condition, which is equivalent to 
  \begin{align*}
      \frac{1}{2} \leq \frac{\norm{\grad f(x_\theta)}_2^{2}}{\rho_\theta\norm{\grad f(x_\theta)}_{p^*}^{2}} \leq 2
  \end{align*}
  Defining 
  \begin{align*}
      \zeta \pa{\theta} \coloneqq \frac{\norm{\grad f(x_\theta)}_2^{2}}{\rho_\theta\norm{\grad f(x_\theta)}_{p^*}^{2}} = \frac{18L(1-\theta)^2 A_t}{\theta}\frac{\norm{\grad f(x_\theta)}_2^{2}}{\norm{\grad f(x_\theta)}_{p^*}^{2}},
  \end{align*}
   we can search for $\theta$ such that $\frac{1}{2} \leq \zeta\pa{\theta} \leq 2$, which then yields all conditions satisfied simultaneously.
   
Given that $\zeta\pa{0} = \infty$, $\zeta\pa{1} = 0$, $\exists \ \theta^\ast \in [0,1]$ such that $\zeta\pa{\theta^\ast} = \frac{5}{4}$. Then one can use $\log_2\pa{\frac{1}{\delta}}$ bineary search step to find $\theta$ such that $\abs{\theta - \theta^\ast} \leq \delta$. Now we verify that with certain choice of $\delta$, $\zeta\pa{\theta} \in \bracks{\frac{1}{2}, 2}$. How the function value changes with respect to the input within some $\delta$-neighborhood is characterized by the Lipschitz constant of $\zeta\pa{\theta}$, i.e., the upper bound on $\abs{\frac{d}{d\theta}\zeta\pa{\theta}}$. For simplicity, we start by analyzing $\abs{\frac{d}{d\theta}\log\pa{\zeta\pa{\theta}}}$. It's trivial that 
\begin{align*}
       \log\pa{\zeta\pa{\theta}} = 2\log\pa{1-\theta} - \log\pa{\theta} + \log\pa{18LA_t} + 2\log\pa{\norm{\grad f(x_\theta)}_2} - 2 \log\pa{\norm{\grad f(x_\theta)}_{p^*}}.
\end{align*}
Taking the derivative, we have
\begin{align*}
       \frac{d}{d\theta}\log\pa{\zeta\pa{\theta}} &= -\frac{2}{1-\theta} - \frac{1}{\theta} + \frac{2}{\norm{\grad f(x_\theta)}_2^2}\grad^2 f(x_\theta)\grad f(x_\theta) \frac{d}{d\theta}x_\theta \\ 
       &\quad - \frac{2}{\norm{\grad f(x_\theta)}_{p^*}^{p^*}} \grad^2 f(x_\theta)\begin{bmatrix}
           \abs{\grad f(x_\theta)[1]}^{p^*-2}\grad f(x_\theta)[1] \\
           \vdots \\
           \abs{\grad f(x_\theta)[d]}^{p^*-2}\grad f(x_\theta)[d]
         \end{bmatrix}\frac{d}{d\theta}x_\theta.
   \end{align*}
Taking the $\ell_p$ norm on both sides, we have
\begin{align*}
        \abs{\frac{d}{d\theta}\log\pa{\zeta\pa{\theta}}} &\leq \frac{2}{1-\theta} + \frac{1}{\theta} + \frac{2}{\norm{\grad f(x_\theta)}_2^2}\norm{\grad^2 f(x_\theta)}_p\norm{\grad f(x_\theta)}_p \norm{\frac{d}{d\theta}x_\theta}_p \\ 
       &\quad + \frac{2}{\norm{\grad f(x_\theta)}_{p^*}} \norm{\grad^2 f(x_\theta)}_p\norm{\frac{d}{d\theta}x_\theta}_p \\
       &\leq \frac{2}{1-\theta} + \frac{1}{\theta} + \frac{4}{\norm{\grad f(x_\theta)}_2} \norm{\grad^2 f(x_\theta)}_p\norm{\frac{d}{d\theta}x_\theta}_p\\
       &\leq \frac{2}{1-\theta} + \frac{1}{\theta} + \frac{4d^\frac{p-2}{2p}}{\norm{\grad f(x_\theta)}_{p^*}} \norm{\grad^2 f(x_\theta)}_p\norm{\frac{d}{d\theta}x_\theta}_p.
    \end{align*}
For the first two terms, we have
 \begin{align*}
       \frac{1}{1-\theta} \leq 1 + \frac{\theta}{(1-\theta)^2} = 1 + \frac{18LA_t}{\zeta\pa{\theta}} \frac{\norm{\grad f(x_\theta)}_2^{2}}{\norm{\grad f(x_\theta)}_{p^*}^{2}} \leq 1 + \frac{18LA_t}{\zeta\pa{\theta}},
   \end{align*}
and
   \begin{align*}
       \frac{1}{\theta} \leq 2 + \frac{(1-\theta)^2}{\theta} = 2 + \frac{\zeta\pa{\theta}}{18LA_t}\frac{\norm{\grad f(x_\theta)}_{p^*}^{2}}{\norm{\grad f(x_\theta)}_2^{2}} \leq 2 + \frac{d^\frac{p-2}{p}\zeta\pa{\theta}}{18LA_t}.
   \end{align*}
    For each of the relevant components in the third term, we lower bound $\norm{\grad f(x_\theta)}_{p^*}$ in Lemma \ref{lem:grad_lower}, upper bound $\norm{\grad^2 f(x_\theta)}_p$ and $\norm{\frac{d}{d\theta}x_\theta}_p$ by Lemma \ref{lem:hessian},  and Lemma \ref{lem:d_theta}. With these results, we have
   \begin{align*}
        \abs{\frac{d}{d\theta}\log\pa{\zeta\pa{\theta}}} &\leq \frac{2}{1-\theta} + \frac{1}{\theta} + \frac{4 d^\frac{p-2}{2p} \norm{\grad^2 f(x_\theta)}_p}{\norm{\grad f(x_\theta)}_{p^*}} \norm{\frac{d}{d\theta}x_\theta}_p \\
        &\leq 4 + \frac{36LA_t}{\zeta\pa{\theta}} + \frac{d^\frac{p-2}{p}\zeta\pa{\theta}}{18LA_t} + \frac{4 d^\frac{p-2}{2p} \norm{\grad^2 f(x_\theta)}_p}{\norm{\grad f(x_\theta)}_{p^*}} \norm{\frac{d}{d\theta}x_\theta}_p \\
        &\leq 4 + \frac{36LA_t}{\zeta\pa{\theta}} + \frac{d^\frac{p-2}{p}\zeta\pa{\theta}}{18LA_t} + \frac{4 L \pa{R + 1458R^2}d^\frac{3(p-2)}{2p}}{\frac{f(x_\theta) - f(x^\ast)}{\pa{19+d^\frac{p-2}{p}}R + \pa{2916 + 1458d^\frac{p-2}{p}}R^2}}  & \text{(Lemma \ref{lem:d_theta}, \ref{lem:grad_lower}, \ref{lem:hessian})}\\
        &\leq 4 + \frac{36LA_t}{\zeta\pa{\theta}} + \frac{d^\frac{p-2}{p}\zeta\pa{\theta}}{18LA_t} + \frac{4 L D_Rd^\frac{5(p-2)}{2p}}{f(x_\theta) - f(x^\ast)} \\
        &\leq 4 + \frac{36LA_t}{\zeta\pa{\theta}} + 18d^\frac{p-2}{p}\zeta\pa{\theta} + \frac{4 L D_Rd^\frac{5(p-2)}{3p}}{f(x_\theta) - f(x^\ast)} & \text{(Lemma \ref{lem:growth})} \\
        &\leq \omega\pa{\theta}\pa{1 + \frac{1}{\zeta\pa{\theta}} + \zeta\pa{\theta}}
    \end{align*}
    for $\omega\pa{\theta} = 4d^\frac{5(p-2)}{2p}\pa{6 + 9LA_t + \frac{L D_R}{f(x_\theta) - f(x^\ast)}}$ in which $D_R = \pa{R + 1458R^2}\pa{20R + 4374R^2}$.

Now we choose the proper $\delta$. First, we claim that $A_t \leq \frac{R^2}{2\eps}$ and $f(x_\theta) - f(x^\ast) \geq \eps$ or otherwise we have $f\pa{x_t} - f\pa{x^\ast} \leq \eps$ by Lemma \ref{lem:three_upper} (1) or $f(x_\theta) - f(x^\ast) \leq \eps$ by direct negation of $f(x_\theta) - f(x^\ast) \geq \eps$, in either case we have found a desired solution. Now for $\omega\pa{\theta} = 4d^\frac{5(p-2)}{2p}\pa{6 + 9LA_t + \frac{L D_R}{f(x_\theta) - f(x^\ast)}}$, we have
    \begin{align*}
        10\omega\pa{\theta} &= 40d^\frac{5(p-2)}{2p}\pa{6 + 9LA_t + \frac{L D_R}{f(x_\theta) - f(x^\ast)}} \\
        &\leq 40d^\frac{5(p-2)}{2p}\pa{6 + \frac{9LR^2}{2\eps} + \frac{2L D_R}{\eps}} \\
        &\leq 320d^\frac{5(p-2)}{2p}\pa{\frac{LD_R}{\eps}}
    \end{align*}
    for $\eps \leq \frac{LD_R}{6}$ in which $D_R = \pa{R + 1458R^2}\pa{20R + 4374R^2}$.
    Therefore, by choosing $\delta = \frac{\eps}{320d^\frac{5(p-2)}{2p}LD_R}$, we have $\abs{\theta - \theta^\ast} \leq \frac{1}{10\omega\pa{\theta}}$, and by Lemma \ref{lem:zeta_range} we confirm that $\zeta\pa{\theta} \in \bracks{\frac{1}{2}, 2}$. And the complexity of binary search to find such $\theta$ is $\log_2\pa{\frac{1}{\delta}} \leq 9+\frac{5(p-2)}{2p}\log_2\pa{d} + \log_2\pa{\frac{LD_R}{\eps}}$.
    
\end{proof}

\subsection{Proof of Supporting Lemmas}

\begin{lemma} \label{lem:psi_upper} $\forall \ t \in [T]$,
    $\psi_t\pa{x} \leq A_t f\pa{x} + \frac{1}{2}\norm{x-x_0}_2^2$.
\end{lemma}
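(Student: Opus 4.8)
The plan is to prove the bound by induction on $t$, exactly mirroring the informal argument already used in the proof of Theorem~\ref{thm:main}. The key structural fact is that $\psi_t$ is built up incrementally via $\psi_{t+1}(x) = \psi_t(x) + a_{t+1}\brk*{f(x_{t+1}) + \inner{\grad f(x_{t+1})}{x-x_{t+1}}}$, and the bracketed quantity is precisely the first-order (linear) lower model of $f$ at $x_{t+1}$, so convexity lets us replace it by $f(x)$ at the cost of an inequality.

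Concretely, first I would handle the base case $t=0$: since $A_0 = 0$ and $\psi_0(x) = \tfrac12\norm{x-x_0}_2^2$ by definition, the claimed inequality holds with equality. For the inductive step, assume $\psi_t(x) \leq A_t f(x) + \tfrac12\norm{x-x_0}_2^2$ for all $x$. Then, using the update rule for $\psi_{t+1}$ and the convexity of $f$ (which gives $f(x_{t+1}) + \inner{\grad f(x_{t+1})}{x - x_{t+1}} \leq f(x)$ together with $a_{t+1} > 0$), I obtain
\[
\psi_{t+1}(x) \;\leq\; \psi_t(x) + a_{t+1} f(x) \;\leq\; A_t f(x) + \tfrac12\norm{x-x_0}_2^2 + a_{t+1} f(x) \;=\; A_{t+1} f(x) + \tfrac12\norm{x-x_0}_2^2,
\]
where the final equality uses $A_{t+1} = A_t + a_{t+1}$ as set in the algorithm. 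This closes the induction and yields the claim for every $t \in [T]$.

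There is no real obstacle here; the only thing to be careful about is invoking convexity in the correct direction (the linear model underestimates $f$) and keeping track of the recursion $A_{t+1} = A_t + a_{t+1}$ so that the accumulated coefficient of $f(x)$ telescopes to exactly $A_t$. An alternative, fully explicit route would be to unroll the recursion directly, $\psi_t(x) = \psi_0(x) + \sum_{i=1}^{t} a_i\brk*{f(x_i) + \inner{\grad f(x_i)}{x - x_i}} \leq \psi_0(x) + \sum_{i=1}^{t} a_i f(x) = \tfrac12\norm{x-x_0}_2^2 + A_t f(x)$, but the inductive phrasing is cleaner and self-contained.
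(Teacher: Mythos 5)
Your proof is correct and takes essentially the same approach as the paper: both rely on convexity to replace the linear model $f(x_{t+1}) + \inner{\grad f(x_{t+1})}{x-x_{t+1}}$ by $f(x)$ and then telescope the recursion $A_{t+1} = A_t + a_{t+1}$ (the paper phrases it as unrolling rather than induction, which is the alternative you already note). No gaps.
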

\begin{proof} \
    By definition,
    \begin{align*}
        \psi_t(x) &= \psi_{t-1}(x) + a_t[f(x_t) + \inner{\grad f(x_t)}{x-x_t}] \\
        &\leq \psi_{t-1}(x) + a_tf(x) \\
        &\leq \psi_0(x) + \sum_{i=1}^t a_i f(x) \\
        &= \frac{1}{2}\norm{x-x_0}_2^2 + A_t f\pa{x},
    \end{align*}
    where the first inequality holds by convexity and the second by applying the first recursively.
\end{proof}

\begin{lemma} \label{lem:three_upper} $\forall \ t \in [T]$,
\begin{itemize}
    \item [(1)] $f\pa{x_t} - f\pa{x^\ast} \leq \frac{1}{2A_t} \norm{x_0 - x^\ast}_2^2$,
    \item [(2)] $\norm{v_t - x^\ast}_p \leq \norm{v_t - x^\ast}_2 \leq \norm{x_0 - x^\ast}_2 = R$,
    \item [(3)] $B_t \leq \frac{1}{2} \norm{x_0 - x^\ast}_2^2$.
\end{itemize}
\end{lemma}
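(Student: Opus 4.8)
The three bounds all flow from combining the recurrence of Lemma~\ref{lem:recurrence}, which gives $A_tf(x_t)+B_t\le\psi_t^*$, with the upper bound of Lemma~\ref{lem:psi_upper} evaluated at $x^*$, namely $\psi_t(x^*)\le A_tf(x^*)+\tfrac12\norm{x^*-x_0}_2^2$, together with the trivial inequality $\psi_t^*\le\psi_t(x^*)$. Stringing these together yields the master chain
\begin{equation*}
A_tf(x_t)+B_t\;\le\;\psi_t^*\;\le\;\psi_t(x^*)\;\le\;A_tf(x^*)+\tfrac12\norm{x^*-x_0}_2^2 .
\end{equation*}
For part (1), I would drop the nonnegative term $B_t\ge 0$ (it is a sum of nonnegative quantities by its definition in Lemma~\ref{lem:recurrence}), obtaining $A_tf(x_t)\le A_tf(x^*)+\tfrac12\norm{x_0-x^*}_2^2$, and divide through by $A_t$. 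For part (3), I would instead keep $B_t$ and drop $A_t(f(x_t)-f(x^*))\ge 0$ (since $x^*$ minimizes $f$), which immediately gives $B_t\le\tfrac12\norm{x_0-x^*}_2^2$.

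For part (2) I would exploit that $\psi_t$ is $1$-strongly convex with respect to $\norm{\cdot}_2$: this holds because $\psi_0(x)=\tfrac12\norm{x-x_0}_2^2$ has this property and every subsequent update adds only a term affine in $x$, so strong convexity is preserved for all $t$. Since $v_t=\argmin_x\psi_t(x)$, strong convexity gives $\psi_t(x^*)\ge\psi_t^*+\tfrac12\norm{x^*-v_t}_2^2$. On the other hand, from the master chain, $\psi_t^*\ge A_tf(x_t)+B_t\ge A_tf(x^*)$ (again using $B_t\ge 0$ and $f(x_t)\ge f(x^*)$), while Lemma~\ref{lem:psi_upper} gives $\psi_t(x^*)\le A_tf(x^*)+\tfrac12\norm{x^*-x_0}_2^2$. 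Subtracting, $\tfrac12\norm{x^*-v_t}_2^2\le\tfrac12\norm{x^*-x_0}_2^2$, i.e.\ $\norm{v_t-x^*}_2\le\norm{x_0-x^*}_2=R$. The final inequality $\norm{v_t-x^*}_p\le\norm{v_t-x^*}_2$ is just the standard norm monotonicity $\norm{\cdot}_p\le\norm{\cdot}_2$ valid for $p\ge 2$.

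There is essentially no genuine obstacle here; the only point requiring a moment's care is the observation used in part (2) that the running regularizer $\psi_t$ retains its $1$-strong convexity in $\norm{\cdot}_2$ across iterations, which is what licenses the quadratic lower bound $\psi_t(x^*)\ge\psi_t^*+\tfrac12\norm{x^*-v_t}_2^2$. Everything else is bookkeeping around Lemmas~\ref{lem:recurrence} and~\ref{lem:psi_upper} and the nonnegativity of $B_t$ and of $f(x_t)-f(x^*)$.
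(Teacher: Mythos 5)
Your proposal is correct and follows essentially the same route as the paper: both rest on the chain $A_tf(x_t)+B_t\le\psi_t^*\le\psi_t(x^*)\le A_tf(x^*)+\tfrac12\norm{x^*-x_0}_2^2$ from Lemmas~\ref{lem:recurrence} and~\ref{lem:psi_upper}, together with the quadratic growth $\psi_t(x^*)=\psi_t(v_t)+\tfrac12\norm{x^*-v_t}_2^2$ around $v_t$ (which the paper obtains from the exact Taylor expansion of $\psi_t$ with identity Hessian, and you obtain equivalently from its preserved $1$-strong convexity in $\norm{\cdot}_2$). The paper simply keeps all three nonnegative terms in a single rearranged inequality and reads off (1)--(3) at once, which is only a cosmetic difference from your part-by-part bookkeeping.
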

\begin{proof} \
Given the definition that $\forall \ t \in [T]$, $\psi_t(x) = \psi_{t-1}(x) + a_t[f(x_t) + \inner{\grad f(x_t)}{x-x_t}]$, we have $\grad \psi_t(x) = \grad \psi_{t-1}(x) +  a_t \grad f(x_t)$. Applying this equality recursively, $\grad \psi_t(x) = \grad \psi_0(x) + \sum_{i=1}^t a_i \grad f(x_i)$. Given that $\psi_0(x) = \frac{1}{2}\norm{x - x_0}_2^2$, we have $\forall \ t \in [T]$, $\grad^2 \psi_t(x) = \grad^2 \psi_0(x) = \bf{I}$ and third-order derivative of $\psi_t(x)$ being zero. As a result, we have for the second-order Taylor expansion of $\psi_t(x)$ at $v_t$ that $\forall \ x$,
        \begin{align}
            \psi_t(x) &= \psi_t(v_t) + \inner{\nabla \psi_t(v_t)}{x - v_t} + \frac{1}{2}\grad^2\psi_t(v_t)\norm{x-v_t}_2^2 \nonumber \\
            &= \psi_t(v_t) + \frac{1}{2}\norm{x-v_t}_2^2 \label{eq:phi_vt}
        \end{align}
        where the first-order term vanishes by the definition $v_t = \argmin\limits_{x\in \R^d} \psi_t(x)$ which indicates that $\nabla \psi_t(v_t) = 0$. Plugging in $x^\ast$, we have by Lemma \ref{lem:recurrence},
        \begin{align*}
            A_t f(x_t) + B_t &\leq \min\limits_{x \in \R^d} \psi_t(x) \\
            &\leq \psi_t(v_t) \\
            &= \psi_t(x^\ast) - \frac{1}{2}\norm{x^\ast-v_t}_2^2 \\
            &\leq A_t f(x^\ast) + \frac{1}{2}\norm{x^\ast-x_0}_2^2 - \frac{1}{2}\norm{x^\ast-v_t}_2^2 
        \end{align*}
        where the equality follows from Eq. \eqref{eq:phi_vt} and the last inequality from Lemma \ref{lem:psi_upper}. Rearranging the terms, we have 
        \begin{align*}
            A_t \bracks{f(x_t) - f(x^\ast)} + B_t + \frac{1}{2}\norm{x^\ast-v_t}_2^2 \leq \frac{1}{2}\norm{x^\ast-x_0}_2^2.
        \end{align*}
        Given that $A_t \geq 0$, $f(x_t) - f(x^\ast) \geq 0$, $B_t \geq 0$, and $\frac{1}{2}\norm{x^\ast-v_t}_2^2 \geq 0$, we have $A_t \bracks{f(x_t) - f(x^\ast)} \leq \frac{1}{2}\norm{x^\ast-x_0}_2^2$ which yields (1), $\frac{1}{2}\norm{x^\ast-v_t}_2^2 \leq \frac{1}{2}\norm{x^\ast-x_0}_2^2$ which yields (2), and $B_t \leq \frac{1}{2}\norm{x^\ast-x_0}_2^2$ which completes the proof.
\end{proof}
   
\begin{lemma} \label{lem:R_bounds} $\forall \ t \in [T]$, for $\norm{x_0 - x^\ast}_2 = R$,
    \begin{itemize}
    \item [(1)] $\norm{x_t - x^\ast}_p \leq \norm{x_0 - x^\ast}_2 + \frac{2916}{t}\norm{x_0 - x^\ast}_2^2 = R + 2916R^2$,
    \item [(2)] $\norm{x_t - v_t}_p \leq 2R + 2916R^2$.
\end{itemize}
\end{lemma}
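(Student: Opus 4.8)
The plan is to bound $\norm{x_t - x^\ast}_p$ by inserting $x^\ast$ and using the triangle inequality together with the norm comparison $\norm{\cdot}_p \leq \norm{\cdot}_2$, after which the key is to control the deviation $\norm{x_t - x^\ast}_2$ (or more directly $\norm{x_t - x^\ast}_p$) in terms of $R$ and the accumulated step sizes. First I would recall from Lemma~\ref{lem:progress} that the steepest-descent step satisfies $\norm{x_{t+1} - y_t}_p = \frac{1}{2L}\norm{\grad f(y_t)}_{p^*}^{\frac{1}{p-1}}$, and from Lemma~\ref{lem:three_upper} that $f(x_t) - f(x^\ast) \leq \frac{R^2}{2A_t}$ and $\norm{v_t - x^\ast}_2 \leq R$. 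The idea is that $x_t$ is close to $v_t$ (the mirror-descent iterate), which is itself close to $x^\ast$; the gap between $x_t$ and $v_t$ is governed by how much the steepest descent steps have moved things, which in turn is controlled by the gradient norms, which are small because the optimality gap is small.

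The key steps, in order: (i) write $y_{t-1} = (1-\tau_{t-1})x_{t-1} + \tau_{t-1}v_{t-1}$ and $x_t = y_{t-1} - \frac{1}{2L}\norm{\grad f(y_{t-1})}_{p^*}^{\frac{2-p}{p-1}} g_{t-1}$, so that $\norm{x_t - y_{t-1}}_p = \frac{1}{2L}\norm{\grad f(y_{t-1})}_{p^*}^{\frac{1}{p-1}}$; (ii) bound $\norm{\grad f(y_{t-1})}_{p^*}$ — here I would use smoothness to relate it to the optimality gap, e.g. via $\norm{\grad f(y)}_{p^*}^2 \leq 2L(f(y) - f(x^\ast))$ (the bound proved in Appendix~\ref{app:gradnormlem}), combined with a bound on $f(y_{t-1}) - f(x^\ast)$ coming from convexity and $f(x_{t-1}) - f(x^\ast) \leq \frac{R^2}{2A_{t-1}}$, $f(v_{t-1})$-type estimates, and $A_{t-1} \geq \Omega(t^2/L)$ from Lemma~\ref{lem:growth}; (iii) bound $\norm{y_{t-1} - x^\ast}_p$ by the convex combination $\norm{x_{t-1} - x^\ast}_p$ and $\norm{v_{t-1} - x^\ast}_p \leq R$, setting up an induction; (iv) combine via triangle inequality: $\norm{x_t - x^\ast}_p \leq \norm{x_t - y_{t-1}}_p + \norm{y_{t-1} - x^\ast}_p$, and sum the step contributions $\sum_{i} \frac{1}{2L}\norm{\grad f(y_i)}_{p^*}^{\frac{1}{p-1}}$, which should telescope/collapse to something of order $R^2$ because $\sum 1/i \cdot (\text{stuff})$ is controlled; the constant $2916 = (54)^2$ suggests the $18$'s and factors of $3$ from the earlier lemmas being squared. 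Finally (v), part (2) follows immediately from part (1) plus $\norm{x_t - v_t}_p \leq \norm{x_t - x^\ast}_p + \norm{v_t - x^\ast}_p \leq (R + 2916R^2) + R = 2R + 2916R^2$.

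The main obstacle I expect is step (ii)–(iv): getting the summation $\sum_{i=1}^{t}\norm{\grad f(y_{i-1})}_{p^*}^{\frac{1}{p-1}}$ to close into the clean bound $\frac{2916}{t}R^2$ (rather than something that grows with $t$). This requires carefully exploiting that $f(y_{i-1}) - f(x^\ast)$ decays like $R^2/A_{i-1} \sim L R^2 / i^2$, so the gradient norm is like $L R / i$, and its $\frac{1}{p-1}$ power — wait, one must be careful here, since for $p$ large the exponent $\frac{1}{p-1}$ is small and $\norm{\grad f}_{p^*}^{\frac{1}{p-1}}$ need not be summable in the naive way; the resolution is presumably that the step is $\frac{1}{2L}\norm{\grad f(y_{i-1})}_{p^*}^{\frac{2-p}{p-1}} g_{i-1}$ and one measures the move in $\ell_p$, giving exactly $\frac{1}{2L}\norm{\grad f(y_{i-1})}_{p^*}^{\frac{1}{p-1}}$, and then one uses $\norm{\grad f(y_{i-1})}_{p^*} \geq$ something, or alternatively bounds $\norm{x_t - y_{t-1}}_p$ directly through a descent-lemma / three-point argument rather than summing all prior steps — i.e. perhaps the cleaner route is $\norm{x_t - x^\ast}_p \leq \norm{x_t - x_{t-1}}_p + \norm{x_{t-1} - x^\ast}_p$ combined with $\norm{x_t - x_{t-1}}_p \leq \norm{x_t - y_{t-1}}_p + \norm{y_{t-1} - x_{t-1}}_p = \norm{x_t - y_{t-1}}_p + \tau_{t-1}\norm{v_{t-1} - x_{t-1}}_p$ and a self-referential bound on $\norm{v_{t-1} - x_{t-1}}_p$. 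I would try the induction on $\norm{x_t - x^\ast}_p$ directly, carrying the hypothesis $\norm{x_s - x^\ast}_p \leq R + \frac{2916}{s}R^2$ for $s < t$ and verifying the inductive step dominates, treating the $\frac{1}{t}$ scaling as the crux that makes the geometric-like sum converge.
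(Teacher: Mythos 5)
Your overall strategy (triangle inequality through $y_{t-1}$, the convex-combination structure of $y_{t-1}$, controlling the step $\norm{x_t-y_{t-1}}_p$ by a gradient norm, then unrolling) is the same skeleton the paper uses, but your sketch has two concrete gaps that would prevent it from closing. First, the step length is miscomputed: since $\norm{g_t}_p=\norm{\grad f(y_t)}_{p^*}^{\frac{1}{p-1}}$ and the prefactor is $\norm{\grad f(y_t)}_{p^*}^{\frac{p-2}{p-1}}$, the exponents add to $1$ and the move is exactly $\norm{x_{t+1}-y_t}_p=\frac{1}{2L}\norm{\grad f(y_t)}_{p^*}$, not $\frac{1}{2L}\norm{\grad f(y_t)}_{p^*}^{\frac{1}{p-1}}$; your worry about the summability of the $\frac{1}{p-1}$ power is a red herring. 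More importantly, the paper does not use the closed form at all here: it uses Eq.~\eqref{eq:gradnorm} to get $\norm{x_{t+1}-y_t}_p\le\frac{1}{L}\norm{\grad f(x_{t+1})}_{p^*}$, i.e.\ the gradient \emph{at the new iterate} $x_{t+1}$, which is what makes the rest of the argument work. Second, your plan to bound $\norm{\grad f(y_{t-1})}_{p^*}$ through $f(y_{t-1})-f(x^*)$ fails because $y_{t-1}$ is a convex combination involving $v_{t-1}$, and there is no $O(R^2/A_{t-1})$ bound on $f(v_{t-1})-f(x^*)$ (only $\norm{v_{t-1}-x^*}_2\le R$, which via smoothness gives merely $f(v_{t-1})-f(x^*)=O(LR^2)$); so $\norm{\grad f(y_{t-1})}_{p^*}$ need not decay in $t$, and the unweighted sum of step lengths does not collapse to $O(R^2/t)$ (even with the optimistic estimate $\norm{\grad f(x_{i+1})}_{p^*}\lesssim LR/i$ an unweighted sum gives $R\log t$).

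The mechanism you are missing is the weighted unrolling together with the $B_t$ bound. Unrolling
\begin{equation*}
\norm{x_{t+1}-x^*}_p\ \le\ \norm{x_{t+1}-y_t}_p+\tfrac{A_t}{A_{t+1}}\norm{x_t-x^*}_p+\tfrac{a_{t+1}}{A_{t+1}}\norm{v_t-x^*}_p
\end{equation*}
produces weights $\frac{A_{i+1}}{A_{t+1}}$ on the step terms, so one must control $\frac{1}{L}\sum_{i=0}^{t}\frac{A_{i+1}}{A_{t+1}}\norm{\grad f(x_{i+1})}_{p^*}$. The paper then splits $A_{i+1}=A_{i+1}^{1/2}\cdot A_{i+1}^{1/2}$ and pairs one factor with the gradient norm so that $\sum_i A_{i+1}\norm{\grad f(x_{i+1})}_{p^*}^2=18L\,B_{t+1}\le 9L\norm{x_0-x^*}_2^2$ (Lemma~\ref{lem:three_upper}(3), which comes from the estimate-sequence recurrence of Lemma~\ref{lem:recurrence}) — this is where the $R^2$ comes from — while $\sum_i\frac{A_{i+1}}{A_{t+1}^2}\le\frac{t+1}{A_{t+1}}\le\frac{324}{t+1}$ by Lemma~\ref{lem:growth} supplies the $\frac{1}{t}$ and, with $9\cdot 324=2916$, the constant. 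Your proposal never invokes the $B_t$ term, which is the essential ingredient; without it the inductive bound you describe does not dominate. Your part (2) is correct and matches the paper.
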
 
\begin{proof}
    \begin{itemize}
        \item [(1)] By definition, $y_t = \frac{A_t}{A_{t+1}}x_t + \frac{a_{t+1}}{A_{t+1}}v_t$. Thus,
        \begin{align*}
            \norm{y_t - x^\ast}_p &= \norm{\frac{A_t}{A_{t+1}}x_t + \frac{a_{t+1}}{A_{t+1}}v_t - \frac{A_t}{A_{t+1}}x^\ast - \frac{a_{t+1}}{A_{t+1}}x^\ast}_p \\
            &\leq \frac{A_t}{A_{t+1}}\norm{x_t - x^\ast}_p + \frac{a_{t+1}}{A_{t+1}}\norm{v_t - x^\ast}_p
        \end{align*}
        Also, by Lemma \ref{lem:progress} Eq. \eqref{eq:gradnorm} we have
        \begin{align*}
            L\norm{x_{t+1} - y_{t}}_p^2 &\leq \inner{\grad f(x_{t+1})}{y_t - x_{t+1}} \\
            &\leq \norm{\grad f(x_{t+1})}_{p^*} \norm{y_t - x_{t+1}}_p,
        \end{align*}
        which indicates that 
        \begin{align}
            \norm{x_{t+1} - y_{t}}_p \leq \frac{1}{L}\norm{\grad f(x_{t+1})}_{p^*}. \label{eq:norm_grad_z}
        \end{align}
        Then we have
        \begin{align*}
            \norm{x_{t+1} - x^\ast}_p &= \norm{x_{t+1} - y_t + y_t - x^\ast}_p \\
            &\leq \norm{x_{t+1} - y_t}_p + \norm{y_t - x^\ast}_p \\
            &\leq \norm{x_{t+1} - y_t}_p + \frac{A_t}{A_{t+1}}\norm{x_t - x^\ast}_p + \frac{a_{t+1}}{A_{t+1}}\norm{v_t - x^\ast}_p \\
            &\leq \frac{1}{L}\norm{\grad f(x_{t+1})}_{p^*} + \frac{A_t}{A_{t+1}}\norm{x_t - x^\ast}_p + \frac{a_{t+1}}{A_{t+1}}\norm{x_0 - x^\ast}_p
        \end{align*}
        where the last inequality follows from Eq. \eqref{eq:norm_grad_z} and Lemma \ref{lem:R_bounds} (1). Applying this inequality recursively,
        \begin{align*}
            \norm{x_{t+1} - x^\ast}_p &\leq \frac{A_0}{A_{t+1}}\norm{x_0 - x^\ast}_p + \frac{\sum_{i=1}^{t+1}a_i}{A_{t+1}}\norm{x_0 - x^\ast}_p + \frac{1}{L} \sum_{i=0}^t\frac{A_{i+1}}{A_{t+1}}\norm{\grad f(x_{i+1})}_{p^*} \\
            &= \norm{x_0 - x^\ast}_p + \frac{1}{L}\sum_{i=0}^t\frac{A_{i+1}^\frac{1}{2}}{A_{t+1}} A_{i+1}^\frac{1}{2}\norm{\grad f(x_{i+1})}_{p^*} \\
            &\leq \norm{x_0 - x^\ast}_p + \frac{1}{L}\pa{\sum_{i=0}^t\frac{A_{i+1}}{A_{t+1}^2}}\pa{\sum_{i=0}^t A_{i+1}\norm{\grad f(x_{i+1})}_{p^*}^2} \\
            &= \norm{x_0 - x^\ast}_p + \pa{18\sum_{i=0}^t\frac{A_{i+1}}{A_{t+1}^2}}B_{t+1} \\
            &\leq \norm{x_0 - x^\ast}_p + \pa{9\sum_{i=0}^t\frac{A_{i+1}}{A_{t+1}^2}}\norm{x_0 - x^\ast}_2^2
        \end{align*}
        where the first equality follows from $A_0 = 0$ and $A_{t+1} = \sum_{i=1}^{t+1}a_i$, for the second inequality we applied Cauchy-Schwarz inequality, and for the last inequality we applied Lemma \ref{lem:three_upper} (3). Furthermore,
        \begin{align*}
            \sum_{i=0}^t\frac{A_{i+1}}{A_{t+1}^2} &= \frac{1}{A_{t+1}}\sum_{i=0}^t\frac{A_{i+1}}{A_{t+1}} \\
            &\leq \frac{1}{A_{t+1}}\sum_{i=0}^t\frac{A_{i+1}}{A_{i+1}} \\
            &= \frac{t+1}{A_{t+1}} \\
            &\leq \frac{t+1}{\pa{\frac{\mathcal{G}_{t+1} (t+1)}{18L^{1/2}}}^2} \\
            &\leq \frac{324}{t+1}
        \end{align*}
        where the first inequality follows from $A_{t+1} \geq A_{i+1}$ for $i \leq t$ since $A_{t+1} = \sum_{i=1}^{t+1}a_i$, and the second inequality from Lemma \ref{lem:growth}. Therefore,
        \begin{align*}
            \norm{x_{t+1} - x^\ast}_p &\leq \norm{x_0 - x^\ast}_p + \frac{2916}{t+1}\norm{x_0 - x^\ast}_2^2.
        \end{align*}
        \item [(2)] $\norm{x_t - v_t}_p = \norm{x_t - x^\ast + x^\ast - v_t}_p \leq \norm{x_t - x^\ast}_p + \norm{v_t - x^\ast}_p \leq 2R + 2916R^2$ applying Lemma \ref{lem:three_upper} (2) and Lemma \ref{lem:R_bounds} (1).
    \end{itemize}
\end{proof}

\begin{lemma}[Proposition 3 in \citep{balles2020geometry} ] \label{lem:hessian} $f\colon \R^d \rightarrow \R$ is $L$-smooth in the norm $\norm{\cdot}_p$ if and only if $\forall \ x \in \R^d$, $\norm{\grad^2 f(x)}_p \leq L$.
\end{lemma}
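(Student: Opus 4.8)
The plan is to read $\norm{\grad^2 f(x)}_p$ as the $\ell_p \to \ell_{p^*}$ operator norm, $\norm{\grad^2 f(x)}_p \defeq \sup_{v:\,\norm{v}_p \le 1}\norm{\grad^2 f(x)v}_{p^*}$ (this is the interpretation under which the Hölder/operator-norm step $\abs{w^\top \grad^2 f(x) u} \le \norm{w}_{p^*}\cdot\norm{\grad^2 f(x)}_p\cdot\norm{u}_p$ holds, which is how the quantity is used elsewhere in the paper), and to prove the two directions separately, assuming as the statement implicitly does that $f$ is twice continuously differentiable.

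For the ``if'' direction, I would suppose $\norm{\grad^2 f(x)}_p \le L$ for all $x$ and, given $x, y \in \R^d$, apply the fundamental theorem of calculus to $t \mapsto \grad f(x + t(y-x))$ to obtain $\grad f(y) - \grad f(x) = \int_0^1 \grad^2 f(x + t(y-x))(y-x)\,dt$. Taking $\ell_{p^*}$ norms, using the triangle inequality for vector-valued integrals and then the operator-norm bound pointwise in $t$, gives $\norm{\grad f(y) - \grad f(x)}_{p^*} \le \int_0^1 \norm{\grad^2 f(x + t(y-x))}_p\,\norm{y-x}_p\,dt \le L\norm{y-x}_p$, which is exactly \eqref{eq:smooth}.

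For the ``only if'' direction, I would suppose $f$ is $L$-smooth w.r.t.\ $\norm{\cdot}_p$, fix $x$ and a vector $v$ with $\norm{v}_p \le 1$, and use that $\grad^2 f(x) v$ is the directional derivative of $\grad f$, i.e.\ $\grad^2 f(x) v = \lim_{t \to 0}\tfrac{1}{t}\prn{\grad f(x + tv) - \grad f(x)}$; continuity of $\norm{\cdot}_{p^*}$ then yields $\norm{\grad^2 f(x) v}_{p^*} = \lim_{t\to 0}\tfrac{1}{\abs{t}}\norm{\grad f(x+tv) - \grad f(x)}_{p^*} \le \lim_{t \to 0}\tfrac{1}{\abs{t}}\,L\,\abs{t}\,\norm{v}_p = L\norm{v}_p \le L$, and taking the supremum over all such $v$ gives $\norm{\grad^2 f(x)}_p \le L$. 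The argument is routine; the only points needing care are pinning down the correct matrix norm and justifying the two interchanges (norm with integral, via the triangle inequality for Riemann integrals of continuous maps, and norm with limit, via continuity of $\norm{\cdot}_{p^*}$), so I do not anticipate any genuine obstacle beyond this bookkeeping.
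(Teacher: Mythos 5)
Your proof is correct. Note that the paper does not prove this lemma at all---it is imported verbatim as Proposition~3 of \citet{balles2020geometry}---so there is no internal proof to compare against; your argument (fundamental theorem of calculus plus the pointwise operator-norm bound for one direction, the difference-quotient definition of the directional derivative of $\grad f$ for the other) is the standard one and is essentially what the cited reference does. You were right to pause over the meaning of $\norm{\grad^2 f(x)}_p$: the equivalence only holds for the $\ell_p \to \ell_{p^*}$ operator norm $\sup_{\norm{v}_p \le 1}\norm{\grad^2 f(x)v}_{p^*}$ (for instance, $f(x)=\tfrac12\norm{x}_2^2$ has $\ell_p\to\ell_p$ Hessian norm $1$ but is only $d^{1-2/p}$-smooth w.r.t.\ $\norm{\cdot}_p$), and that is indeed the interpretation under which the lemma is invoked in the binary-search analysis. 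Be aware, though, that Lemma~\ref{lem:lp_hessian} in the same appendix explicitly uses the induced $\ell_p\to\ell_p$ norm for the Hessian of $\norm{z}_p^2$, so the paper's notation is not consistent across the two lemmas; your reading is the correct one for this statement.
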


\begin{lemma} \label{lem:lp_hessian} For $z \in \R^d$ and $s\pa{z} = \bracks{\abs{z[1]}^{p-2}z[1], \cdots, \abs{z[d]}^{p-2}z[d]}^\top$,
    \begin{itemize}
        \item [(1)] $\grad^2_z\norm{z}_p^2 = 2(p-1)\norm{z}_p^{2-p}\mathrm{Diag}\pa{\abs{z[1]}^{p-2}, \cdots, \abs{z[d]}^{p-2}} + 2(2-p)\norm{z}_p^{2(1-p)}s\pa{z} s\pa{z}^\top$,
        \item [(2)] $\grad^2_z\norm{z}_p^2 \succeq 2\norm{z}_p^{2(1-p)}s\pa{z} s\pa{z}^\top$,
        \item [(3)] $\norm{\grad^2_z\norm{z}_p^2}_p \geq \frac{2}{d^\frac{p-2}{2}}$.
    \end{itemize}
\end{lemma}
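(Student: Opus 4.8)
The plan is to establish the three parts in the order stated, since each builds on the previous and the only genuinely nontrivial step is~(2). For~(1), I would simply compute the Hessian by the chain rule. Setting $N(z) \defeq \norm{z}_p^p = \sum_i \abs{z[i]}^p$ so that $\norm{z}_p^2 = N(z)^{2/p}$, using $\del_j N = p\abs{z[j]}^{p-2}z[j] = p\,s(z)[j]$ and the fact that $t \mapsto \abs{t}^{p-2}t$ is $C^1$ with derivative $(p-1)\abs{t}^{p-2}$ for $p \geq 2$, I would differentiate $\del_j\!\pa{N^{2/p}} = 2N^{2/p-1}s(z)[j]$ once more in $z[k]$ and substitute $N^{2/p-1} = \norm{z}_p^{2-p}$, $N^{2/p-2} = \norm{z}_p^{2(1-p)}$: the term from differentiating the prefactor $N^{2/p-1}$ produces the rank-one $2(2-p)\norm{z}_p^{2(1-p)}s(z)s(z)^\top$ piece, and the term from $\del_k s(z)[j] = (p-1)\abs{z[j]}^{p-2}\delta_{jk}$ produces the diagonal piece. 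The only care needed is that this holds at $z \neq 0$ (for $p > 2$ the map $s \mapsto s^{2/p}$ is not $C^2$ at $0$, so $\norm{z}_p^2$ has no Hessian at the origin), and that coordinates with $z[i] = 0$ are harmless since then both $\abs{z[i]}^{p-2}$ and $s(z)[i]$ vanish.

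Part~(2) is the crux. I would subtract the target rank-one matrix and use~(1) to write
\[
\grad^2_z\norm{z}_p^2 - 2\norm{z}_p^{2(1-p)}s(z)s(z)^\top = 2(p-1)\pa{\norm{z}_p^{2-p}\mathrm{Diag}\!\pa{\abs{z[i]}^{p-2}} - \norm{z}_p^{2(1-p)}s(z)s(z)^\top}.
\]
Since $p \geq 2$, it suffices to show the parenthesized matrix is PSD; multiplying through by $\norm{z}_p^{p-2} > 0$ this becomes the Loewner inequality $\mathrm{Diag}\!\pa{\abs{z[i]}^{p-2}} \psdgeq \norm{z}_p^{-p}\,s(z)s(z)^\top$, i.e.\ for every $u \in \R^d$, $\pa{\sum_i \abs{z[i]}^{p-2}z[i]\,u[i]}^2 \leq \pa{\sum_i \abs{z[i]}^p}\pa{\sum_i \abs{z[i]}^{p-2}u[i]^2}$. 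I would prove this by Cauchy--Schwarz, pairing $\abs{z[i]}^{p/2}$ against $\sgn(z[i])\abs{z[i]}^{(p-2)/2}u[i]$ and using $\tfrac p2 + \tfrac{p-2}{2} = p-1$ and $(\sgn z[i])^2\abs{z[i]}^{p-2} = \abs{z[i]}^{p-2}$. This simultaneously exhibits $\grad^2\norm{z}_p^2$ as a sum of two PSD matrices (the positively scaled diagonal-minus-correction block plus the rank-one block), which is the decomposition invoked in the proof of Lemma~\ref{lem:d_theta}. I expect the only step requiring any insight to be choosing this exponent split in the Cauchy--Schwarz application; after that it is bookkeeping.

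Part~(3) follows from~(2). I would sandwich $H \defeq \grad^2_z\norm{z}_p^2 \psdgeq M \defeq 2\norm{z}_p^{2(1-p)}s(z)s(z)^\top \psdgeq 0$, and then lower-bound the matrix norm appearing in Lemma~\ref{lem:hessian}, namely $\norm{H}_p = \sup_{\norm{u}_p \leq 1,\,\norm{v}_p \leq 1} u^\top H v$, by testing at $u = v = z/\norm{z}_p$, which gives $\norm{H}_p \geq z^\top H z/\norm{z}_p^2$. Plugging $u = z$ into the formula from~(1) (equivalently, invoking Euler's identity for the degree-$2$ positively homogeneous function $\norm{z}_p^2$, so that $\grad^2_z\norm{z}_p^2\,z = \grad_z\norm{z}_p^2$) gives $z^\top H z = 2(p-1)\norm{z}_p^2 + 2(2-p)\norm{z}_p^2 = 2\norm{z}_p^2$, whence $\norm{H}_p \geq 2 \geq 2/d^{(p-2)/2}$ since $d \geq 1$ and $p \geq 2$; in fact this route yields the stronger bound $\norm{H}_p \geq 2$. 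If one instead wants the stated exponent to emerge organically, one can lower-bound $\norm{H}_p \geq \norm{M}_p = 2\norm{z}_p^{2(1-p)}\norm{s(z)}_{p^*}^2$ using rank-one-ness, note $\norm{s(z)}_{p^*} = \norm{z}_p^{p-1}$, and (if a dimension-dependent bound is convenient for the downstream use in Lemma~\ref{lem:d_theta}) pass through a coordinatewise norm comparison.
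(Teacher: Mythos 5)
Your parts (1) and (2) coincide with the paper's proof: the same coordinatewise chain-rule computation for the Hessian, and the same decomposition $\grad^2_z\norm{z}_p^2 = 2(p-1)\norm{z}_p^{2(1-p)}M + 2\norm{z}_p^{2(1-p)}s(z)s(z)^\top$ with $M = \norm{z}_p^p\,\mathrm{Diag}\pa{\abs{z[i]}^{p-2}} - s(z)s(z)^\top$ shown PSD by exactly the Cauchy--Schwarz pairing you describe ($\abs{z[i]}^{p/2}$ against $\abs{z[i]}^{(p-2)/2}u[i]$). Nothing to add there.

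Part (3) is where you diverge from the paper, and your shortcut has a genuine gap. The quantity $\norm{H}_p$ here is the induced $\ell_p\to\ell_p$ operator norm (this is how the paper uses it, e.g.\ in computing $\norm{s(z)s(z)^\top}_p = \norm{s(z)}_p\norm{s(z)}_{p^*}$ and in dividing by $\norm{\grad^2_x\norm{\cdot}_p^2}_p$ in Lemma~\ref{lem:d_theta}). By duality this equals $\sup\{u^\top H v : \norm{v}_p\le 1,\ \norm{u}_{p^*}\le 1\}$, \emph{not} the supremum over two $\ell_p$ balls that you wrote. Since $p\ge 2$ gives $\norm{u}_{p^*}\ge\norm{u}_p$, the vector $u=z/\norm{z}_p$ is generally \emph{outside} the $\ell_{p^*}$ unit ball, so testing at $u=v=z/\norm{z}_p$ does not lower-bound $\norm{H}_p$; your claimed bound $\norm{H}_p\ge 2$ therefore does not follow from this argument. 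The Euler-identity idea is still salvageable: the correct test-vector bound is $\norm{H}_p \ge z^\top H z/\pa{\norm{z}_p\norm{z}_{p^*}} = 2\norm{z}_p/\norm{z}_{p^*} \ge 2d^{-(1-2/p)} = 2/d^{(p-2)/p} \ge 2/d^{(p-2)/2}$, which proves (and in fact slightly strengthens) the stated inequality. The paper instead proceeds through your fallback route: $\norm{H}_p \ge 2\norm{z}_p^{2(1-p)}\norm{s(z)s(z)^\top}_p$ with $\norm{s(z)s(z)^\top}_p = \norm{s(z)}_p\norm{s(z)}_{p^*} \ge \norm{s(z)}_2^2 = \norm{z}_{2(p-1)}^{2(p-1)}$, followed by the comparison $\norm{z}_p \le d^{\frac1p - \frac{1}{2(p-1)}}\norm{z}_{2(p-1)}$, which is where the exponent $\frac{p-2}{2}$ comes from. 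So: fix the duality in your test-vector step (or carry out the rank-one route in full) and part (3) goes through.
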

\begin{proof}
    \begin{itemize}
        \item [(1)] Given $\norm{z}_p = \pa{\sum_{i=1}^d\abs{z[i]}^p}^\frac{1}{p}$, we have
        \begin{align*}
            \grad_z\norm{z}_p^2 = 2\norm{z}_p\begin{bmatrix}
           \frac{1}{p}\norm{z}_p^{1-p}\cdot p \abs{z[1]}^{p-2}z[1] \\
           \vdots \\
           \frac{1}{p}\norm{z}_p^{1-p}\cdot p \abs{z[d]}^{p-2}z[d]
         \end{bmatrix}
        = 2\norm{z}_p^{2-p}\begin{bmatrix}
           \abs{z[1]}^{p-2}z[1] \\
           \vdots \\
           \abs{z[d]}^{p-2}z[d]
         \end{bmatrix}
        \end{align*}
        Therefore, $\forall \ i, j \in [d]$,
        \begin{align*}
            \frac{\partial^2}{\partial z[i]\partial z[j]} = \begin{cases}
            2(2-p)\norm{z}_p^{2(1-p)}\abs{z[i]}^{2(p-1)} + 2(p-1)\norm{z}_p^{2-p}\abs{z[i]}^{p-2} & \text{if } i=j \\
            2(2-p)\norm{z}_p^{2(1-p)}\abs{z[i]}^{p-2} z[i] \abs{z[j]}^{p-2} z[j] & \text{if } i \neq j
            \end{cases},
        \end{align*}
        which yields $$\grad^2_z\norm{z}_p^2 = 2(2-p)\norm{z}_p^{2(1-p)}s\pa{z} s\pa{z}^\top + 2(p-1)\norm{z}_p^{2-p}\mathrm{Diag}\pa{\abs{z[1]}^{p-2}, \cdots, \abs{z[d]}^{p-2}}.$$
        \item [(2)] By the result of (1),
        \begin{align*}
            \grad^2_z\norm{z}_p^2 &= 2(p-1)\norm{z}_p^{2-p}\mathrm{Diag}\pa{\abs{z[1]}^{p-2}, \cdots, \abs{z[d]}^{p-2}} \\
            &\quad - 2(p-1)\norm{z}_p^{2(1-p)}s\pa{z} s\pa{z}^\top + 2\norm{z}_p^{2(1-p)}s\pa{z} s\pa{z}^\top \\
            &= 2(p-1)\norm{z}_p^{2(1-p)}\pa{\underbrace{\norm{z}_p^p\mathrm{Diag}\pa{\abs{z[1]}^{p-2}, \cdots, \abs{z[d]}^{p-2}} - s\pa{z} s\pa{z}^\top}_M} \\
            &\quad + 2\norm{z}_p^{2(1-p)}s\pa{z} s\pa{z}^\top.
        \end{align*}
        We now show that matrix $M$ is positive semidefinite. $\forall \ v \in \R^d$, 
        \begin{align*}
            v^\top M v &= \norm{z}_p^p v^\top\mathrm{Diag}\pa{\abs{z[1]}^{p-2}, \cdots, \abs{z[d]}^{p-2}}v - v^\top s\pa{z} s\pa{z}^\top v\\
            &= \norm{z}_p^p \sum_{i=1}^d v_i^2 \abs{z[i]}^{p-2} - \pa{v^\top s\pa{z}}^2 \\
            &= \pa{\sum_{i=1}^d \abs{z[i]}^p} \pa{\sum_{i=1}^d v_i^2 \abs{z[i]}^{p-2}} - \pa{\sum_{i=1}^d v_i \abs{z[i]}^\frac{p-2}{2} \cdot \abs{z[i]}^\frac{p-2}{2}z[i]}^2 \\
            &\geq \pa{\sum_{i=1}^d \abs{z[i]}^p} \pa{\sum_{i=1}^d v_i^2 \abs{z[i]}^{p-2}} - \pa{\sum_{i=1}^d v_i^2 \abs{z[i]}^{p-2}}\pa{\sum_{i=1}^d\abs{z[i]}^{p-2}z[i]^2} \\
            &= 0
        \end{align*}
        where we applied the Cauchy-Schwarz inequality. Therefore, we have $$\grad^2_z\norm{z}_p^2 - 2\norm{z}_p^{2(1-p)}s\pa{z} s\pa{z}^\top = 2(p-1)\norm{z}_p^{2(1-p)}M \succeq \bf{0},$$ which completes the proof.
        \item [(3)] By the result of (2),
        \begin{align*}
            \norm{\grad^2_z\norm{z}_p^2}_p \geq 2\norm{z}_p^{2(1-p)}\norm{s\pa{z} s\pa{z}^\top}_p.
        \end{align*}
        By definition of induced matrix $\ell_p$-norm, 
        \begin{align*}
            \norm{s\pa{z} s\pa{z}^\top}_p &= \sup_{v\colon \norm{v}_p=1} \norm{s\pa{z} s\pa{z}^\top v}_p \\
            &= \norm{s\pa{z}}_p \sup_{v\colon \norm{v}_p=1} \abs{s\pa{z}^\top v} \\
            &= \norm{s\pa{z}}_p \norm{s\pa{z}}_{p^*} \\
            &\geq \norm{s\pa{z}}_2^2 \\
            &= \sum_{i=1}^d \abs{z[i]}^{2(p-1)} \\
            &= \norm{z}_{2(p-1)}^{2(p-1)}
        \end{align*}
        where the second equality uses the fact that $s\pa{z}^\top v$ is a scalar, the third equality follows from the definition of dual vector norm, and the inequality follows the Cauchy-Schwarz inequality. Finally,
        \begin{align*}
            \norm{\grad^2_z\norm{z}_p^2}_p \geq \frac{2\norm{z}_{2(p-1)}^{2(p-1)}}{\norm{z}_p^{2(p-1)}} \geq 2\pa{\frac{\norm{z}_{2(p-1)}}{d^{\frac{1}{p}-\frac{1}{2(p-1)}}\norm{z}_{2(p-1)}}}^{2(p-1)} = \frac{2}{d^\frac{p-2}{2}}.
        \end{align*}
    \end{itemize}
\end{proof}

\begin{lemma} \label{lem:d_theta}
    $\norm{\frac{d}{d\theta}x_\theta}_p = \norm{\frac{d}{d\theta}z_\theta}_p \leq d^\frac{p-2}{p}\pa{R + 1458R^2}$ for $z_\theta = x_\theta - y_\theta$.
\end{lemma}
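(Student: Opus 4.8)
The plan is to write $x_\theta = y_\theta + z_\theta$ and differentiate in $\theta$ term by term. Since $y_\theta = \theta x_t + (1-\theta)v_t$ is affine in $\theta$, we get $\frac{d}{d\theta}y_\theta = x_t - v_t$, and $\norm{x_t - v_t}_p \le 2R + 2916R^2$ by Lemma~\ref{lem:R_bounds}(2); so it remains to control $\norm{\frac{d}{d\theta}z_\theta}_p$. By Lemma~\ref{lem:progress}, $z_\theta$ is the closed-form $\ell_p$ steepest-descent step at $y_\theta$ and depends on $y_\theta$ only through $\grad f(y_\theta)$, so this reduces to bounding how sensitively the steepest-descent step reacts to a perturbation of its base point.

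I would obtain $\frac{d}{d\theta}z_\theta$ by differentiating the first-order optimality condition characterizing it, namely $L\,\grad_z\norm{z}_p^2\big|_{z=z_\theta} = -\grad f(y_\theta)$, which upon differentiating in $\theta$ yields the linear system
\[
L\,\grad^2_z\norm{z}_p^2\big|_{z=z_\theta}\,\frac{d}{d\theta}z_\theta \;=\; -\grad^2 f(y_\theta)\,(x_t - v_t).
\]
To extract $\frac{d}{d\theta}z_\theta$ and bound its $\ell_p$-norm I would invoke Lemma~\ref{lem:lp_hessian}, whose content is precisely that $\grad^2_z\norm{z}_p^2$ is a sum of the two positive semidefinite matrices $2(p-1)\norm{z}_p^{2(1-p)}M$ and $2\norm{z}_p^{2(1-p)}s(z)s(z)^\top$, together with the quantitative bound $\norm{\grad^2_z\norm{z}_p^2}_p \ge 2d^{-(p-2)/2}$ of its part~(3). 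This PSD-sum structure is what lets one solve the system and control the inverse, after which submultiplicativity of induced norms, $\norm{\frac{d}{d\theta}z_\theta}_p \le \frac1L\norm{(\grad^2_z\norm{z}_p^2)^{-1}}_p\,\norm{\grad^2 f(y_\theta)}_p\,\norm{x_t - v_t}_p$, combined with $\norm{\grad^2 f(y_\theta)}_p \le L$ (Lemma~\ref{lem:hessian}) and Lemma~\ref{lem:R_bounds}(2), closes the estimate. Passing between the $\ell_2$, $\ell_p$, and $\ell_{p^*}$ norms that appear along the way costs a factor $d^{1/2-1/p} = d^{(p-2)/(2p)}$ each time, and the two such conversions produce the overall $d^{(p-2)/p}$; collecting constants gives the stated bound, which then also controls $\norm{\frac{d}{d\theta}x_\theta}_p$.

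The hard part will be the inversion of $\grad^2_z\norm{z}_p^2$: for $p>2$ this matrix genuinely degenerates whenever a coordinate of $z_\theta$ (equivalently, of $\grad f(y_\theta)$) vanishes, since the map $w \mapsto \norm{w}_{p^*}^{(p-2)/(p-1)}\bigl(\abs{w[i]}^{1/(p-1)}\sgn(w[i])\bigr)_i$ defining the steepest-descent step is only Hölder continuous (not Lipschitz) near the coordinate hyperplanes. This is exactly why the PSD-sum decomposition of Lemma~\ref{lem:lp_hessian} is needed in place of a naive smallest-eigenvalue argument, and why one must treat the (measure-zero) set of $\theta$ at which $\grad f(y_\theta)$ has a zero coordinate separately, e.g.\ via a limiting argument or by restricting to generic $\theta$, which is all that the later binary-search analysis requires. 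A more routine, bookkeeping-level difficulty is tracking which norm each quantity is measured in so that the $d^{(p-2)/(2p)}$ conversion factors multiply out to exactly $d^{(p-2)/p}$.
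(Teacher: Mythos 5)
Your proposal follows essentially the same route as the paper: implicitly differentiate the first-order optimality condition of the steepest-descent subproblem, solve the resulting linear system involving $\grad^2_z\norm{z}_p^2$, and combine Lemma~\ref{lem:lp_hessian}(3), Lemma~\ref{lem:hessian}, and Lemma~\ref{lem:R_bounds}(2) to get the bound. One small divergence is worth flagging: you bound $\frac{d}{d\theta}x_\theta$ by splitting $x_\theta = y_\theta + z_\theta$ and adding $\norm{\frac{d}{d\theta}y_\theta}_p = \norm{x_t - v_t}_p$ to the bound on $\norm{\frac{d}{d\theta}z_\theta}_p$; this yields an extra additive $2R+2916R^2$ and does not give the claimed equality $\norm{\frac{d}{d\theta}x_\theta}_p = \norm{\frac{d}{d\theta}z_\theta}_p$. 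The paper instead differentiates the optimality condition in the $x$-variable directly and asserts that the mixed partial $\grad_y\grad_x\pa{L\norm{x_\theta-y_\theta}_p^2}$ vanishes, so that only $\grad^2 f(y_\theta)(x_t-v_t)$ appears on the right-hand side; your version of the linear system (with $z_\theta$ as the unknown) is the more carefully derived one, and the discrepancy between the two is exactly the cross-term the paper discards. Your closing caveats are also apt: the inversion step requires controlling $\inf_{\norm{v}_p=1}\norm{\grad^2_z\norm{z}_p^2\,v}_p$ rather than the operator norm that Lemma~\ref{lem:lp_hessian}(3) actually lower-bounds, and the degeneracy when coordinates of $z_\theta$ vanish is real — both are points the paper's proof passes over silently, so you have reproduced its argument together with its weak spots.
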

\begin{proof} \ 
Denote $F(x_\theta, y_\theta) = f(y_\theta) + \inner{\grad f(y_\theta)}{x_\theta - y_\theta} + L\norm{x_\theta - y_\theta}_p^2$. By the definition that $x_\theta = \argmin\limits_{x \in \R^d} F(x, y_\theta)$ and first optimality condition, we know that $$\grad_x F(x_\theta, y_\theta) = \grad f(y_\theta) + \grad_x \pa{L\norm{x_\theta - y_\theta}_p^2} = 0.$$ Taking the gradient with respect to $\theta$ on both sides yields
\begin{align}
    \grad^2_x F(x_\theta, y_\theta) \frac{d}{d\theta}x_\theta + \grad_y\grad_x F(x_\theta, y_\theta) \frac{d}{d\theta}y_\theta = 0, \label{eq:grad_optimality}
\end{align}
in which $\grad^2_x F(x_\theta, y_\theta) = \grad^2_x \pa{L\norm{x_\theta - y_\theta}_p^2}$, $\frac{d}{d\theta}y_\theta = x_t - v_t$, and \begin{align*}
    \grad_y\grad_x F(x_\theta, y_\theta) &= \grad^2 f(y_\theta) + \grad_y \grad_x \pa{L\norm{x_\theta - y_\theta}_p^2} \\
    &= \grad^2 f(y_\theta)
\end{align*}
since $\grad_y \grad_x \pa{L\norm{x_\theta - y_\theta}_p^2} = 0$ by calculation or by symmetry. Therefore, from Eq. \eqref{eq:grad_optimality},
\begin{align*}
    \frac{d}{d\theta}x_\theta = \pa{\grad^2_x \pa{L\norm{x_\theta - y_\theta}_p^2}}^{-1}\grad^2 f(y_\theta)\pa{v_t - x_t}.
\end{align*}
Taking the $\ell_p$-norm on both sides
    \begin{align*}
        \norm{\frac{d}{d\theta}x_\theta}_p &\leq \frac{\norm{\grad^2 f(y_\theta)} \norm{v_t-x_t}_p}{L\norm{\grad^2_x \pa{\norm{x_\theta - y_\theta}_p^2}}_p} \\
        &\leq \frac{d^\frac{p-2}{p}}{2} \norm{x_t - v_t}_2 \\
        &\leq d^\frac{p-2}{p}\pa{R + 1458R^2}
    \end{align*}
    where the second inequality follows from Lemma \ref{lem:hessian} and \ref{lem:lp_hessian}, and the last inequality follows from Lemma \ref{lem:R_bounds}. By letting $F(z_\theta, y_\theta) = f(y_\theta) + \inner{\grad f(y_\theta)}{z_\theta} + L\norm{z_\theta}_p^2$ and following the same argument, we can show the result also holds for $ \norm{\frac{d}{d\theta}z_\theta}_p$.
\end{proof}

\begin{lemma} \label{lem:z_bound}
    $\norm{x_\theta - y_\theta}_p \leq 18R + d^\frac{p-2}{p}\pa{R + 1458R^2}$.
\end{lemma}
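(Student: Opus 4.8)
The plan is to reduce Lemma~\ref{lem:z_bound} to the closed form for the steepest-descent step already extracted in Lemma~\ref{lem:progress}, and then bound the resulting gradient norm by $L$-smoothness. Since $x_\theta$ is \emph{by construction} the $\norm{\cdot}_p$ steepest-descent update from $y_\theta$ (this holds for every $\theta \in (0,1)$, irrespective of whether the first condition of the algorithm is met), Eq.~\eqref{eq:update} applies verbatim with $y_t$ replaced by $y_\theta$: $x_\theta - y_\theta = -\frac{1}{2L}\norm{\grad f(y_\theta)}_{p^*}^{\frac{p-2}{p-1}} g_\theta$ with $g_\theta[i] \defeq \grad f(y_\theta)[i]/\abs{\grad f(y_\theta)[i]}^{\frac{p-2}{p-1}}$. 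The first step is a short exponent computation: since $\abs{g_\theta[i]} = \abs{\grad f(y_\theta)[i]}^{1-\frac{p-2}{p-1}} = \abs{\grad f(y_\theta)[i]}^{\frac{1}{p-1}}$, we get $\norm{g_\theta}_p^p = \sum_{i=1}^d \abs{\grad f(y_\theta)[i]}^{\frac{p}{p-1}} = \norm{\grad f(y_\theta)}_{p^*}^{p^*}$, hence $\norm{g_\theta}_p = \norm{\grad f(y_\theta)}_{p^*}^{1/(p-1)}$, so the two powers of $\norm{\grad f(y_\theta)}_{p^*}$ combine and
\begin{equation*}
\norm{z_\theta}_p = \norm{x_\theta - y_\theta}_p = \frac{1}{2L}\norm{\grad f(y_\theta)}_{p^*}^{\frac{p-2}{p-1}+\frac{1}{p-1}} = \frac{1}{2L}\norm{\grad f(y_\theta)}_{p^*}.
\end{equation*}

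Second, I would bound $\norm{\grad f(y_\theta)}_{p^*}$. By $L$-smoothness w.r.t.~$\norm{\cdot}_p$ and $\grad f(x^*) = 0$ we have $\norm{\grad f(y_\theta)}_{p^*} \leq L\norm{y_\theta - x^*}_p$. Now $y_\theta = \theta x_t + (1-\theta) v_t$ with $\theta = A_t/A_{t+1} \in (0,1)$, so by the triangle inequality $\norm{y_\theta - x^*}_p \leq \theta\norm{x_t - x^*}_p + (1-\theta)\norm{v_t - x^*}_p$. Here $\norm{v_t - x^*}_p \leq R$ by Lemma~\ref{lem:three_upper}(2), and $\norm{x_t - x^*}_p \leq R + 2916R^2$ by Lemma~\ref{lem:R_bounds}(1) when $t \geq 1$ (while for $t=0$ one has $v_0 = x_0$, so $y_0 = x_0$ and $\norm{y_0 - x^*}_p \leq \norm{x_0 - x^*}_2 = R$ directly). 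In all cases $\norm{y_\theta - x^*}_p \leq R + 2916R^2$, and therefore
\begin{equation*}
\norm{x_\theta - y_\theta}_p \leq \frac{1}{2}\norm{y_\theta - x^*}_p \leq \frac{R}{2} + 1458R^2 \leq 18R + d^{\frac{p-2}{p}}\pa{R + 1458R^2},
\end{equation*}
the last inequality being immediate since $d^{\frac{p-2}{p}} \geq 1$ for $p \geq 2$ and every term is nonnegative. This already establishes the lemma, with room to spare.

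An alternative route, which recovers the precise shape of the stated bound and is presumably the intended one, is an integration argument: use the reference point $\theta \to 0^+$, at which $y_\theta \to v_t$ and hence $\norm{z_{0^+}}_p = \frac{1}{2L}\norm{\grad f(v_t)}_{p^*} \leq \frac{1}{2}\norm{v_t - x^*}_p \leq \frac{R}{2}$ by the same identity together with Lemma~\ref{lem:three_upper}(2); then, since $\norm{\frac{d}{d\theta}z_\theta}_p \leq d^{\frac{p-2}{p}}\pa{R + 1458R^2}$ uniformly in $\theta$ by Lemma~\ref{lem:d_theta} and $\theta \leq 1$,
\begin{equation*}
\norm{z_\theta}_p \leq \norm{z_{0^+}}_p + \int_0^\theta \norm{\frac{d}{d\theta'}z_{\theta'}}_p\,d\theta' \leq \frac{R}{2} + d^{\frac{p-2}{p}}\pa{R + 1458R^2} \leq 18R + d^{\frac{p-2}{p}}\pa{R + 1458R^2}.
\end{equation*}

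All of the above is routine; the only genuine points of care are the exponent bookkeeping in the identity $\norm{g_\theta}_p = \norm{\grad f(y_\theta)}_{p^*}^{1/(p-1)}$ (making sure the two powers of the gradient norm multiply to the first power, so that $\norm{x_\theta - y_\theta}_p = \frac{1}{2L}\norm{\grad f(y_\theta)}_{p^*}$ exactly), and the $t=0$ edge case of Lemma~\ref{lem:R_bounds}(1), which must be handled separately via $\norm{\cdot}_p \leq \norm{\cdot}_2$. I do not expect any substantive obstacle here; the lemma is essentially a bookkeeping consequence of the closed-form update plus the diameter bounds already in hand.
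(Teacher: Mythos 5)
Your proof is correct, and your first route is genuinely different from (and simpler than) the paper's. The paper does not use the closed-form identity $\norm{x_\theta - y_\theta}_p = \frac{1}{2L}\norm{\grad f(y_\theta)}_{p^*}$ at all; instead it combines the smoothness lower bound $F(x_\theta,y_\theta) \geq f(x_\theta) + \frac{L}{2}\norm{x_\theta-y_\theta}_p^2$ with $F(x_\theta,y_\theta)\leq F(y_\theta,y_\theta)=f(y_\theta)$ to get $\norm{x_\theta-y_\theta}_p^2 \leq \frac{2}{L}\pa{f(y_\theta)-f(x^\ast)}$, evaluates this at the anchor $\theta=1$ (where $y_{\theta=1}=x_t$, giving the $18R$ term via Lemma~\ref{lem:three_upper} and Lemma~\ref{lem:growth}), and then transports to general $\theta$ using the derivative bound $\norm{\frac{d}{d\theta}z_\theta}_p \leq d^{\frac{p-2}{p}}\pa{R+1458R^2}$ from Lemma~\ref{lem:d_theta} --- which is where the dimension factor enters. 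Your second route is essentially that same integration argument anchored at $\theta\to 0^+$ instead of $\theta=1$, so it is a minor variant. Your first route, by contrast, exploits the exact identity $\norm{x_\theta-y_\theta}_p=\frac{1}{2L}\norm{\grad f(y_\theta)}_{p^*}$ (your exponent bookkeeping checks out: $\frac{p-2}{p-1}+\frac{1}{p-1}=1$) together with $\norm{\grad f(y_\theta)}_{p^*}\leq L\norm{y_\theta-x^\ast}_p$ and the convexity of $y_\theta$ between $x_t$ and $v_t$, yielding the \emph{dimension-free} bound $\frac{R}{2}+1458R^2$, which strictly dominates the stated one (and would, if propagated, slightly tighten Lemma~\ref{lem:grad_lower} and the constants $D_R$ downstream). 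The only points requiring care --- the $t=0$ edge case of Lemma~\ref{lem:R_bounds}(1) and the fact that the closed form holds for every $\theta$ independently of the $\rho_t$ condition --- you handle explicitly, so there is no gap.
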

\begin{proof} \ 
By $\ell_p$-smoothness we have $f(x_\theta) \leq f(y_\theta) + \inner{\grad f(y_\theta)}{x_\theta - y_\theta} + \frac{L}{2}\norm{x_\theta - y_\theta}_p$. Then for
\begin{align*}
    F(x_\theta, y_\theta) &= f(y_\theta) + \inner{\grad f(y_\theta)}{x_\theta - y_\theta} + L\norm{x_\theta - y_\theta}_p^2 \\
    &\geq f(x_\theta) - \frac{L}{2}\norm{x_\theta - y_\theta}_p^2 + L\norm{x_\theta - y_\theta}_p^2 \\
    &= f(x_\theta) + \frac{L}{2}\norm{x_\theta - y_\theta}_p^2.
\end{align*}
As a result, rearranging the terms we get
\begin{align*}
    \norm{x_\theta - y_\theta}_p^2 &\leq \frac{2}{L} \pa{F(x_\theta, y_\theta) - f(x_\theta)} \\
    &\leq \frac{2}{L} \pa{F(x_\theta, y_\theta) - f(x^\ast)} \\
    &\leq \frac{2}{L} \pa{F(y_\theta, y_\theta) - f(x^\ast)} \\
    &=  \frac{2}{L} \pa{f(y_\theta) - f(x^\ast)}
\end{align*}
where the last inequality follows from that $x_\theta = \argmin\limits_{x \in \R^d} F(x, y_\theta)$. When $\theta = 1$, we have $y_{\theta=1} = x_t$. Then we have
\begin{align*}
    \norm{x_{\theta=1} - y_{\theta=1}}_p^2 &\leq  \frac{2}{L} \pa{f(x_t) - f(x^\ast)} \\
    &\leq \frac{1}{LA_t} \norm{x_0 - x^\ast}_2^2 \\
\end{align*}
where the second inequality follows from Lemma \ref{lem:three_upper}. Finally, for $\norm{\frac{d}{d\theta}\pa{x_\theta - y_\theta}}_p = \norm{\frac{d}{d\theta}z_\theta}_p \leq d^\frac{p-2}{p}\pa{R + 1458R^2}$, by Taylor's inequality,
    \begin{align*}
        \norm{x_\theta - y_\theta}_2 &\leq \norm{x_{\theta=1} - y_{\theta=1}}_p + d^\frac{p-2}{p}\pa{R + 1458R^2} \abs{\theta - 1} \\
        &\leq \frac{\norm{x_0 - x^\ast}_2}{\sqrt{L}\frac{\mathcal{G}_t t}{18L^{1/2}}} + d^\frac{p-2}{p}\pa{R + 1458R^2} \\
        &\leq 18R + d^\frac{p-2}{p}\pa{R + 1458R^2}
    \end{align*}
    where we applied Lemma \ref{lem:R_bounds}, \ref{lem:d_theta}, and \ref{lem:growth}.  
\end{proof}

\begin{lemma} \label{lem:grad_lower}
$\norm{\grad f\pa{x_\theta}}_{p^*} \geq \frac{f(x_\theta) - f(x^\ast)}{\pa{19+d^\frac{p-2}{p}}R + \pa{2916 + 1458d^\frac{p-2}{p}}R^2}$.
\end{lemma}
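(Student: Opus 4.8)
The plan is to run the standard convexity-plus-H\"older argument and then reduce everything to a single bound on $\norm{x_\theta - x^*}_p$. First, convexity of $f$ gives $f(x_\theta) - f(x^*) \le \inner{\grad f(x_\theta)}{x_\theta - x^*}$, and pairing $\ell_{p^*}$ with $\ell_p$ via H\"older's inequality yields $f(x_\theta) - f(x^*) \le \norm{\grad f(x_\theta)}_{p^*}\norm{x_\theta - x^*}_p$. Hence it suffices to show the diameter bound $\norm{x_\theta - x^*}_p \le \pa{19 + d^{\frac{p-2}{p}}}R + \pa{2916 + 1458\,d^{\frac{p-2}{p}}}R^2$, after which dividing through (the case $x_\theta = x^*$ being trivial since both sides vanish) finishes the proof.

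To establish the diameter bound, I would split $x_\theta - x^* = (x_\theta - y_\theta) + (y_\theta - x^*)$ and apply the triangle inequality. The term $\norm{x_\theta - y_\theta}_p$ is controlled directly by Lemma~\ref{lem:z_bound}, which gives $\norm{x_\theta - y_\theta}_p \le 18R + d^{\frac{p-2}{p}}\pa{R + 1458R^2}$. For $\norm{y_\theta - x^*}_p$, I would use the identity $y_\theta = \theta x_t + (1-\theta)v_t$ with $\theta \in [0,1]$ to write it as a convex combination, so $\norm{y_\theta - x^*}_p \le \theta\norm{x_t - x^*}_p + (1-\theta)\norm{v_t - x^*}_p$; then Lemma~\ref{lem:R_bounds}(1) bounds $\norm{x_t - x^*}_p$ by $R + 2916R^2$ and Lemma~\ref{lem:three_upper}(2) bounds $\norm{v_t - x^*}_p$ by $R$, so the whole convex combination is at most $R + 2916R^2$. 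Summing the two pieces gives $\norm{x_\theta - x^*}_p \le 18R + d^{\frac{p-2}{p}}(R + 1458R^2) + R + 2916R^2$, which is exactly the claimed denominator.

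There is no genuine obstacle here, as every ingredient is already in place; the only point requiring care is the index bookkeeping induced by the substitution $\theta \coloneqq A_t/A_{t+1}$: $x_\theta$ stands for $x_{t+1}$, whereas $y_\theta$ is built from the earlier iterates $x_t$ and $v_t$, so Lemma~\ref{lem:R_bounds}(1) must be invoked at index $t$ (using $1/t \le 1$ to discard the $2916/t$ factor) and Lemma~\ref{lem:z_bound} at the current $\theta$. Assembling these bounds and substituting back into the consequence of convexity and H\"older yields the stated lower bound on $\norm{\grad f(x_\theta)}_{p^*}$.
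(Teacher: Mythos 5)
Your proposal is correct and follows essentially the same route as the paper: convexity plus H\"older, the decomposition $x_\theta - x^* = (x_\theta - y_\theta) + (y_\theta - x^*)$ with the convex-combination bound on $y_\theta$, and the same three supporting lemmas (\ref{lem:z_bound}, \ref{lem:R_bounds}, \ref{lem:three_upper}) assembled into the identical denominator. The indexing remark about $x_\theta = x_{t+1}$ versus $y_\theta$ being built from $x_t$ and $v_t$ is handled consistently with the paper.
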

\begin{proof} \
    Starting with the definition of convexity and applying Cauchy-Schwarz inequality,
\begin{align*}
    f(x_\theta) &\leq f(x^\ast) + \inner{\grad f\pa{x_\theta}}{x_\theta - x^\ast} \\
    &\leq f(x^\ast) + \norm{\grad f\pa{x_\theta}}_{p^*} \norm{x_\theta - x^\ast}_p \\
    &\leq f(x^\ast) + \norm{\grad f\pa{x_\theta}}_{p^*} \norm{x_\theta - y_\theta + y_\theta - x^\ast}_p \\
    &\leq f(x^\ast) + \norm{\grad f\pa{x_\theta}}_{p^*} \pa{\norm{x_\theta - y_\theta}_p + \norm{y_\theta - x^\ast}_p} \\
    &\leq f(x^\ast) + \norm{\grad f\pa{x_\theta}}_{p^*} \pa{\norm{x_\theta - y_\theta}_p + \norm{\theta x_t + (1-\theta)v_t - x^\ast}_p} \\
    &\leq f(x^\ast) + \norm{\grad f\pa{x_\theta}}_{p^*} \pa{\norm{x_\theta - y_\theta}_p + \theta\norm{x_t - x^\ast}_p + (1-\theta)\norm{v_t - x^\ast}_p}
\end{align*}
We can bound the three terms in the parenthesis by Lemma \ref{lem:z_bound}, \ref{lem:three_upper}, and \ref{lem:R_bounds} as follows:
\begin{align*}
    f(x_\theta) &\leq f(x^\ast) + \norm{\grad f\pa{x_\theta}}_{p^*} \pa{18R + d^\frac{p-2}{p}\pa{R + 1458R^2} + \theta\pa{R + 2916R^2} + (1-\theta)R} \\
    &\leq  f(x^\ast) + \norm{\grad f\pa{x_\theta}}_{p^*} \pa{\pa{19+d^\frac{p-2}{p}}R + \pa{2916 + 1458d^\frac{p-2}{p}}R^2} 
\end{align*}
Rearranging the terms we have
\begin{align*}
    \norm{\grad f\pa{x_\theta}}_{p^*} \geq \frac{f(x_\theta) - f(x^\ast)}{\pa{19+d^\frac{p-2}{p}}R + \pa{2916 + 1458d^\frac{p-2}{p}}R^2}.
\end{align*}
\end{proof}

\begin{lemma} \label{lem:zeta_range}
    For $\zeta \colon [0,1] \rightarrow \R_+$ that satisfies $\forall \ \theta \in [0,1]$, $\abs{\frac{d}{d\theta}\log\pa{\zeta\pa{\theta}}} \leq \omega \pa{1 + \frac{1}{\zeta\pa{\theta}} + \zeta\pa{\theta}}$, and $\exists \ \theta^\ast \in [0,1]$ such that $\zeta\pa{\theta^\ast} = \frac{5}{4}$, if it holds for $\tilde{\theta}$ such that  $\abs{\tilde{\theta} - \theta^\ast} \leq \frac{1}{10\omega}$ for some $\omega \geq 0$, then one has for such $\tilde{\theta}$ that
    \begin{align*}
        \zeta\pa{\tilde{\theta}} \in \bracks{\frac{1}{2}, 2}.
    \end{align*}
\end{lemma}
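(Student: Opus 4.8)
The plan is to pass to the logarithm $u(\theta) \defeq \log \zeta(\theta)$, so that the hypothesis reads $\abs{u'(\theta)} \leq \omega\pa{1 + e^{-u(\theta)} + e^{u(\theta)}}$, the target conclusion $\zeta(\tilde\theta) \in [\tfrac12,2]$ becomes $u(\tilde\theta) \in [-\log 2, \log 2]$, and the anchor point satisfies $u(\theta^\ast) = \log\tfrac54$, which lies \emph{strictly} inside $[-\log 2,\log 2]$. The first ingredient is the elementary bound: $v \mapsto 1 + e^v + e^{-v}$ is convex, so on $[-\log 2,\log 2]$ it is maximized at an endpoint, giving $1 + e^v + e^{-v} \leq 1 + 2 + \tfrac12 = \tfrac72$ there. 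Hence, \emph{as long as} $u$ has not left $[-\log 2,\log 2]$, we have $\abs{u'} \leq \tfrac72\omega$.

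The only delicate point is that this derivative bound is conditional on $u$ staying in the interval (the factor $1+e^u+e^{-u}$ could be arbitrarily large if $u$ wandered far), so I would finish with a first-exit-time (equivalently, contradiction) argument. Assume without loss of generality $\tilde\theta \geq \theta^\ast$ (the case $\tilde\theta < \theta^\ast$ is identical, working on $[\tilde\theta,\theta^\ast]$), and suppose toward a contradiction that $u(\tilde\theta) \notin [-\log 2,\log 2]$. Since $u(\theta^\ast) = \log\tfrac54 \in (-\log 2,\log 2)$ and $u$ is continuous, let $\theta_1 \in (\theta^\ast,\tilde\theta]$ be the infimum of the set on which $u$ leaves the open interval $(-\log 2,\log 2)$; then $u(\theta_1) \in \{-\log 2,\log 2\}$ while $u(s)\in[-\log 2,\log 2]$ for all $s \in [\theta^\ast,\theta_1]$. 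Applying the conditional derivative bound over this whole subinterval and integrating (or invoking the mean value theorem),
\[
\abs{u(\theta_1) - u(\theta^\ast)} \;\leq\; \tfrac72\,\omega\,(\theta_1 - \theta^\ast) \;\leq\; \tfrac72\,\omega\cdot\tfrac{1}{10\omega} \;=\; \tfrac{7}{20},
\]
whereas $\abs{u(\theta_1) - u(\theta^\ast)} \geq \log 2 - \log\tfrac54 = \log\tfrac85 \approx 0.47 > \tfrac{7}{20}$, a contradiction. Therefore no such exit point exists, $u(\tilde\theta) \in [-\log 2,\log 2]$, i.e.\ $\zeta(\tilde\theta) \in [\tfrac12,2]$. (The degenerate case $\omega = 0$ is immediate, as the hypothesis then forces $\zeta \equiv \tfrac54$.)

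I expect the main obstacle to be nothing more than presenting the bootstrapping step cleanly: one must resist the temptation to assert $\abs{u'} \leq \tfrac72\omega$ globally, and instead justify it only up to the first time $u$ could reach $\pm\log 2$, which is exactly what the first-exit-time framing accomplishes. The remaining pieces — the convexity bound on $1+e^v+e^{-v}$ and the numerical inequality $\log\tfrac85 > \tfrac{7}{20}$ — are routine. It is worth recording the comfortable slack here ($0.47$ versus $0.35$), which indicates the constant $\tfrac{1}{10}$ in the neighborhood radius is not tight and could be relaxed if needed elsewhere.
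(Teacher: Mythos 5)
Your proof is correct, and it rests on the same core mechanism as the paper's: a Lipschitz bound on $\zeta$ (or its logarithm) that is only valid while $\zeta$ stays in a bounded range, combined with the mean value theorem over the $\tfrac{1}{10\omega}$-neighborhood of $\theta^\ast$. The execution differs in two respects. First, you work in log-space, where the conditional bound is $\abs{u'} \leq \tfrac{7}{2}\omega$ on the set where $u \in [-\log 2, \log 2]$ and the contradiction is the numerical inequality $\tfrac{7}{20} < \log\tfrac{8}{5}$; the paper instead converts back to $\abs{\zeta'} \leq (1+\beta+\beta^2)\omega$, where $\beta$ denotes the supremum of $\zeta$ over the neighborhood, and checks $\tfrac{1}{10} \leq \tfrac{3}{4(1+2+4)} = \tfrac{3}{28}$. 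Second---and this is where your write-up is genuinely tighter---the paper's argument is circular as literally stated: the Lipschitz constant depends on $\beta$, which is defined as the range of $\zeta$ over the very neighborhood being controlled, and the paper resolves this only by asserting that $[\alpha,\beta] \subset [\tfrac12,2]$ must hold. Your first-exit-time argument is exactly the continuity/bootstrapping step needed to make that assertion rigorous, so your version can be read as a cleaned-up proof of the same lemma. Both routes land with comfortable slack inside the stated radius, consistent with your closing remark that the constant $\tfrac{1}{10}$ is not tight.
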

\begin{proof} \
    $\forall \ \theta$ such that $\abs{\theta - \theta^\ast} \leq \gamma$, we denote the range of $\zeta\pa{\theta} \in \bracks{\alpha, \beta}.$ Then we have
    \begin{align*}
        \abs{\frac{d}{d\theta}\zeta\pa{\theta}} &= \abs{\zeta\pa{\theta} \frac{1}{\zeta\pa{\theta}} \frac{d}{d\theta}\zeta\pa{\theta}}\\
        &= \abs{\zeta\pa{\theta} \frac{d}{d\theta}\log\pa{\zeta\pa{\theta}}}\\
        &\leq \omega \pa{\zeta\pa{\theta} + 1 + \zeta\pa{\theta}^2} \\
        &\leq \pa{1+\beta+\beta^2}\omega 
    \end{align*}
    By the mean value inequality, 
    \begin{align*}
        \abs{\zeta\pa{\theta} - \zeta\pa{\theta^\ast}} \leq \pa{1+\beta+\beta^2}\omega  \abs{\theta - \theta^\ast} \leq \pa{1+\beta+\beta^2}\omega \gamma.
    \end{align*}
    Let $\gamma = \frac{3}{4\pa{1+\beta+\beta^2}\omega}$, we have $\abs{\zeta\pa{\theta} - \zeta\pa{\theta^\ast}} \leq \frac{3}{4}$, which implies that if $$\abs{\theta - \theta^\ast} \leq \frac{3}{4\pa{1+\beta+\beta^2}\omega}$$ holds, then one has $\frac{1}{2} = \zeta\pa{\theta^\ast} - \frac{3}{4} \leq \zeta\pa{\theta} \leq \zeta\pa{\theta^\ast} + \frac{3}{4} = 2,$
    i.e., $\zeta\pa{\theta} \in \bracks{\frac{1}{2}, 2}$. Therefore, we must have $\bracks{\alpha, \beta} \subset \bracks{\frac{1}{2}, 2}$, i.e., $\beta \leq 2$.
    As a result, we verify for $\tilde{\theta}$, 
    $$\abs{\tilde{\theta} - \theta^\ast} \leq \frac{1}{10\omega} \leq \frac{3}{4\pa{1+2+2^2}\omega} \leq \frac{3}{4\pa{1+\beta+\beta^2}\omega},$$
    and therefore we have $\zeta\pa{\tilde{\theta}} \in \bracks{\frac{1}{2}, 2}.$
\end{proof}

\section{Empirical Validation}\label{app:experiments}
We provide in this section experimental evidence to validate (and complement) our theoretical contributions. In order to do so, we consider the function $\text{LogSumExp}(v)$ (related to the softmax function, e.g., \citep{kelner2014almost, bullins2020highly}), which is defined as
\begin{equation*}
    \text{LogSumExp}(v) \defeq \log\pa{\sum\limits_{i=1}^n e^{v[i]}}\quad.
\end{equation*} 
Next, we consider $A \in \R^{n \times d}$ such that each $A_{i,j}$ is drawn from a Bernouilli with $p=0.8$, and $b \in \R^n$ such that each $b_i$ is drawn from a normal distribution with mean 0 and variance 1. Following from this, the functions we aim to minimize are
\begin{equation*}
f(x) = \text{LogSumExp}(Ax-b) + \frac{\mu}{2}\norm{x}^2,
\end{equation*}
for $\mu \in \{0, 1e-6, 1e-4, 1e-2\}$. The experiments were run on a MacBook Pro laptop computer.

\begin{figure}
\includegraphics[width=\linewidth]{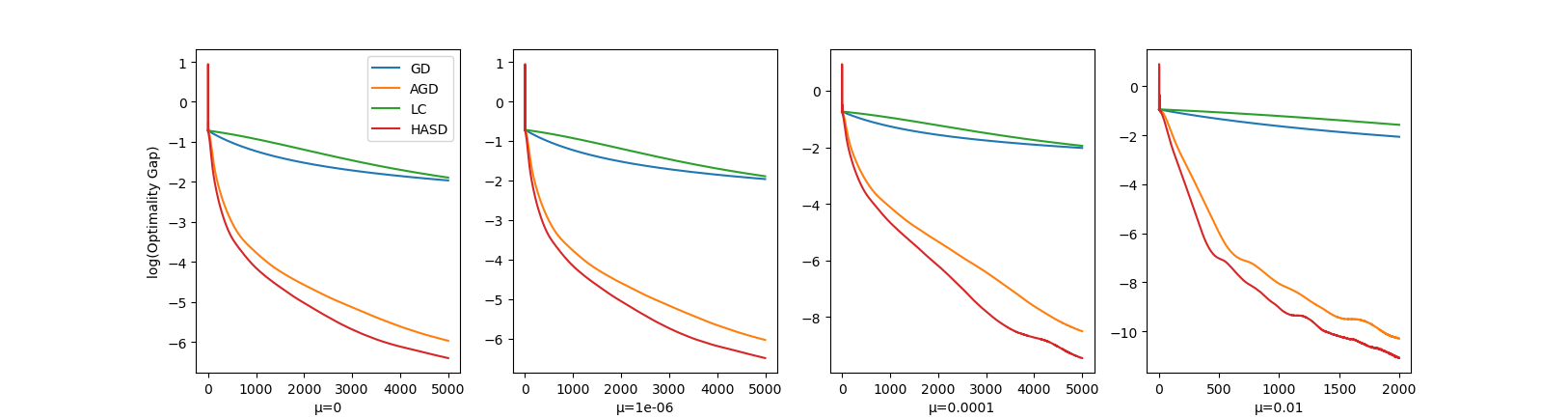}
\caption{Comparison of HASD (our method) with Gradient Descent (GD), Accelerated Gradient Descent (AGD), and LC w.r.t.~$\norm{\cdot}_\infty$ (LC), across different choices of $\mu \in \{0, 1e-6, 1e-4, 1e-2\}$. All plots have been parameter tuned (in terms of stepsize) based on the set of possible stepsizes provided. The x-axis of each plot represents the number of iterations, and the y-axis represents $\log(\text{Optimality Gap})$. Notably, the addition of the implicit coupling term leads to significant improvements over the (non-implicitly-coupled) LC method, and our algorithm performs in a manner comparable to (or slightly better than) AGD.}\label{fig:experiments}
\centering
\end{figure}

We compare the methods \textbf{Gradient Descent} (GD), \textbf{Accelerated Gradient Descent} (AGD), \textbf{Linear Coupling} w.r.t.~$\norm{\cdot}_\infty$ (LC), and our methods \textbf{Hyper-Accelerated Steepest Descent} (HASD) w.r.t.~$\norm{\cdot}_\infty$. We tune the stepsize parameter over the set $\{1e-10, 2e-10, 5e-10, 1e-9, 2e-9, 5e-9, 1e-8, 2e-8, 5e-8, 1e-7, 2e-7, 5e-7, 1e-6, 2e-6, 5e-6, 1e-5,2e-5,5e-5,1e-4,2e-4,5e-4,1e-3,2e-3,5e-3,1e-2,2e-2,5e-2,1e-1,2e-1,5e-1,1\}$, and the results may be found in Figure~\ref{fig:experiments}. Notably, our algorithm performs slightly better than AGD, and significantly better than its (non-implicitly-coupled) counterpart LC.

\end{document}